\newtheorem{theorem}{Theorem}[section]
\newtheorem{proposition}[theorem]{Proposition}
\newtheorem{lemma}[theorem]{Lemma}
\newtheorem{remark}[theorem]{Remark}
\newtheorem{ass}[theorem]{Assumption}
\def\ts{\thinspace}
\renewcommand{\epsilon}{\varepsilon}
\newcommand{\eps}{\varepsilon}
\newcommand{\R}{\mathbb{R}}
\newcommand{\bigo}{\mathcal{O}}
\newcommand{\CC}{\mathcal{C}}
\def \IR{\mathbb R}
\def \IN{\mathbb N}
\def \IE{\mathbb E}
\def \eps{\epsilon}
\def \H{\mathcal H}
\def \s{\rm s}
\def \A{\mathcal{A}}
\def \B{\mathcal{B}}
\def \C{\mathcal{C}}
\def \D{\mathcal{D}}
\def \R{\mathcal{R}}
\DeclareMathOperator*{\limas}{lim\,a.s.}
\DeclareMathOperator*{\reg}{reg}
\title{
High-order integrator for sampling the invariant distribution of a class of parabolic SPDEs with additive space-time noise.
}
\author{ 
Charles-Edouard Br\'ehier\textsuperscript{1} and Gilles Vilmart\textsuperscript{2}
}
\begin{document}
\footnotetext[1]{
Univ Lyon, Universit\'e Claude Bernard Lyon 1, CNRS UMR 5208, Institut Camille Jordan, 43 blvd. du 11 novembre 1918, F-69622 Villeurbanne cedex, France. brehier@math.univ-lyon1.fr
}
\footnotetext[2]{
Universit\'e de Gen\`eve, Section de math\'ematiques, 2-4 rue du Li\`evre, CP 64, CH-1211 Gen\`eve 4, Switzerland, Gilles.Vilmart@unige.ch}

\maketitle

\begin{abstract}

We introduce a time-integrator to sample with high order of accuracy
the invariant distribution for a class of semilinear SPDEs driven by an additive space-time noise. 
Combined with a postprocessor, the new method is a modification with negligible overhead of the standard linearized implicit Euler-Maruyama method.
We first provide an analysis of the integrator when applied for SDEs (finite dimension), where we prove that the method has order $2$ for the approximation of the invariant distribution, instead of $1$.
We then perform a stability analysis of the integrator in the semilinear SPDE context, and we prove in a linear case that a higher order of convergence is achieved.
Numerical experiments, including the semilinear heat equation driven by space-time white noise, confirm
the theoretical findings and illustrate the  
efficiency of the approach.

\smallskip
\noindent
{\it Keywords:\,}
stochastic partial differential equations, postprocessor, invariant measure, ergodicity, space-time white noise.
\smallskip

\noindent
{\it AMS subject classification (2010):\,}
60H15, 60H35, 37M25
 \end{abstract}


\section{Introduction}

We introduce an efficient integrator for the sampling of the invariant probability distribution of
a class of semilinear parabolic SPDEs with additive noise written as an abstract stochastic evolution equation (in the sense of \cite{DPZ14})
\begin{equation}\label{eq:SPDE}
du(t)   = \left(A u(t)+ F(u(t))\right)dt + d W^Q(t),\quad u(0)=u_0.
\end{equation}
Its solution $u(t)$ takes values in a separable infinite dimensional Hilbert space $\H$, with initial condition $u_0$ (assumed deterministic for simplicity). 
We assume that
$-A$ is a positive unbounded self-adjoint linear operator with an associated
sequence of positive eigenvalues $0<\lambda_1 \leq \lambda_2 \leq \ldots$ and an associated complete orthonormal family of eigenvectors $e_1,e_2,\ldots$
The coefficient $F: \H\rightarrow \H$ is a Lipschitz continuous nonlinearity, and it is assumed to derive from a continuously differentiable potential function $V:\H \rightarrow \IR$, {\it i.e.} $F=-D V$. Finally, we assume that $\bigl(W^Q(t)\bigr)_{t\in\IR^+}$ is a $Q$-Wiener process on $\H$ defined on a probability space $(\Omega,\mathcal{F},\mathbb{P})$ fulfilling the usual conditions; the covariance operator $Q:\H\rightarrow \H$ is a bounded, non-negative self-adjoint linear operator, such that $Qe_i=q_ie_i$ for some bounded sequence of real numbers $(q_i)_{i\in\IN^*}$, 
where we use the notation $\IN^*=\{1,2,3,\ldots\}$. 
We assume the following trace condition:
\begin{equation}\label{eq:conditionTrace}
\overline{\s}=\sup\left\{ \s \in(0,1)~:~{\rm Trace}\Bigl( (-A)^{-1+\s}Q\Bigr)<+\infty\right\}>0.
\end{equation}
Then, there exists a unique mild solution of \eqref{eq:SPDE} on $\IR^+$ (see \cite[Chapter $7$]{DPZ14}), {\it i.e.} an $\H$-valued continuous stochastic process $\bigl(u(t)\bigr)_{t\in\IR^+}$ which satisfies
\begin{equation}\label{eq:mild_solution}
u(t)=e^{tA}u_0+\int_{0}^{t}e^{(t-s)A}F(u(s))ds+ \int_{0}^{t}e^{(t-s)A}dW^Q(s).
\end{equation}
This abstract setting includes the stochastic semilinear heat equation 
where $A=\Delta = \sum_{j=1}^d\frac{\partial^2 }{\partial x_j^2}$ 
in $\H=L^2(\D)$ for some open smooth bounded domain $\D\subset \IR^d$,\looseness-3 
\begin{equation} \label{eq:heat1d}
\frac{\partial u}{\partial t}(x,t) = \Delta u(x,t) + f(u(x,t)) + \frac{\partial W^Q}{\partial t}(x,t),
\end{equation}
with homogeneous Dirichlet boundary conditions on $\partial \D$. In~\eqref{eq:heat1d}, the coefficient $f:\IR\to\IR$ is a smooth, Lipschitz function; the associated {\it Nemytskii} coefficient $F:u\in \H\mapsto f\circ u\in\H$ satisfies $F(u)=-DV(u)$ for all $u\in \H$, with $V(u)=-\int_{0}^{1}\langle F(\theta u),u\rangle_{\H}d\theta$. 

If $d=1$, one can consider space-time white noise in~\eqref{eq:heat1d}, ({\it i.e.} with the identity covariance operator $Q=I$; this choice yields $\overline s=1/2$ in \eqref{eq:conditionTrace}). When $d>1$, a nontrivial covariance operator $Q\neq I$ is required, yielding noise which is white in time and colored in space. Notice also that \eqref{eq:conditionTrace} is automatically satisfied with $\overline s=1$ for a trace-class noise with ${\rm Trace}(Q)<\infty$.

To study the long-time behavior of the process $u$, we make the additional assumption that $F$ in \eqref{eq:SPDE} is Lipschitz continuous with constant ${\rm L}>0$ such that ${\rm L} < \lambda_1=\min_{p\in\IN^*}\lambda_p$.
Then (see e.g. Section $8.6$ in \cite{DPZ96}) Eq. \eqref{eq:SPDE}
admits a unique invariant distribution $\mu_\infty$. 
This means that for all (smooth and Lipschitz) test functions $\phi:\H\rightarrow \IR$, and for all initial conditions $u_0$,
$$
\limas_{T \rightarrow \infty} \frac{1}{T}\int_{0}^T \phi(u(t))dt =\int_{\H}\phi(y)d\mu_\infty(y),
$$
where the notation $\limas$ means that the limit holds with probability $1$. Moreover (see e.g. Section $6.3$ in \cite{DPZ96}) $u(t)$ converges in law to $\mu_\infty$ exponentially fast in the following sense:
for all $t>0$, and any test function $\phi$,
\begin{equation} \label{eq:expergoSPDE}
\left| \IE\phi(u(t)) - \int_{\H}\phi(y)d\mu_\infty(y) \right| \leq C(\phi,u_0) e^{-(\lambda_1-L) t}
\end{equation}
where $C(\phi,u_0)$ is independent of $t$.
It is a standard approach to take advantage of the estimate \eqref{eq:expergoSPDE} to compute
ergodic integrals of the form $\int_{\H}\phi(y)d\mu_\infty(y)$. To do so, in practice one needs to rely on a discretization of the evolution equation. 
We thus now present two implicit-explicit time-discretization schemes.


\paragraph{Linearized implicit Euler method}
We first consider the simplest numerical scheme, which is referred to as the linearized implicit Euler method in this article. Given a constant timestep size $h>0$, it is defined by $
v_{n+1}=v_{n}+hAv_{n+1}+hF(v_{n})+\sqrt{h}\xi_{n}^{Q},
$
equivalently
\begin{equation}\label{eq:v_n}
v_{n+1}=J_1\left(v_{n}+hF(v_{n})+\sqrt{h}\xi_{n}^{Q}\right),
\end{equation}
where $v_0=u(0)=u_0$, $J_1=(I-hA)^{-1}$ and $\xi_{n}^{Q}=h^{-1/2}\bigl(W^Q((n+1)h)-W^Q(nh)\bigr)$. 
For a fixed final time $T>0$, it is known that the scheme \eqref{eq:v_n} applied to \eqref{eq:SPDE} has weak order of accuracy $q$ for all $q<\overline s$, {\it i.e.} it satisfies for all $h$ small enough,
\begin{equation} \label{eq:weakorder}
\left|\IE(\phi(v_n)) -  \IE(\phi(u(t_n)))\right| \leq C(u_0,\phi,T) h^q,
\end{equation}
for all $t_n=nh \leq T$, where $C(u_0,\phi,T)$ is independent of $n,h$. 

%
The weak order of accuracy \eqref{eq:weakorder} has been analyzed in \cite{DeP09} for \eqref{eq:v_n} applied to the heat equation in the case $F=0$ and in \cite{D11} in the semilinear case; see also \cite{WG13} for the case of colored noise in dimension $d>1$ and \cite{JK15} where different techniques are used. 
Concerning the approximation of the invariant distribution of the heat equation, it is proved in \cite{B14} that for $d=1$ and $Q=I$, one has for all time $t_n$, similarly to \eqref{eq:expergoSPDE}, the exponential convergence property
\begin{equation} \label{eq:difference1iexp}
\left|\IE(\phi(v_n)) - \int_{\H} \phi(y) d\mu_\infty(y)\right| \leq K(u_0,\phi)e^{-\lambda t_n} + C(\phi) h^r
\end{equation} 
for all $r<\overline s$ with $\overline{\s}=1/2$, where $C(\phi),K(u_0,\phi)$ are independent of $n$ and $h$, and $\lambda$ is independent of $\phi,h,n$. The proof is based on the analysis of the weak approximation using the tools of \cite{D11} (expansion of the error using the backward Kolmogorov equation, Malliavin calculus), with a proof that constants $C(u_0,\phi,T)$ in \eqref{eq:weakorder} can be chosen independent of the final time $T$, in the spirit of \cite{Tal84}.

\paragraph{New method}
The contribution of this paper is the introduction of a modification, denoted $u_n$, 
of the standard Euler scheme \eqref{eq:v_n}, together with a postprocessor, denoted $\overline u_n$, which permits to achieve the estimate \eqref{eq:difference1iexp} with the higher order $r<\overline s+1$ instead of 
$r<\overline s$, when in~\eqref{eq:difference1iexp} $v_n$ is replaced with $\overline u_n$. Precisely, the postprocessed method is given by two sequences $\bigl(u_n\bigr)_{n\geq 0}$ and $\bigl(\overline{u}_n\bigr)_{n\geq 0}$ in $\H$ defined by
\begin{eqnarray}
 u_{n+1}&=& 
J_1 \Big(u_{n}+hF\big(u_n+\frac{1}{2}\sqrt{h}J_2\xi_{n}^{Q}\big)
+\frac{\sqrt{2}-1}{2}  \sqrt h J_2\xi_{n}^{Q} \Big)
+\frac{3-\sqrt{2}}{2}\sqrt{h}J_2\xi_{n}^{Q},
\label{eq:u_n}\\
\overline{u}_{n}&=& u_n+\frac{1}{2} \sqrt{h}J_3\xi_{n}^Q, \label{eq:u_n_bar}
\end{eqnarray}
where we introduce additional operators $J_2,J_3$ as follows: 
$$J_2=(I-\frac {3-\sqrt2 }2  hA)^{-1}, \quad J_3QJ_3^T = (I-\frac h2 A)^{-1}Q.$$ 
Note that $J_3$ is not determined uniquely by the equation above: one may define $J_3 = (I-\frac h2 A)^{-1/2}$, or use a Cholesky decomposition.
We emphasize the importance of the postprocessing operation~\eqref{eq:u_n_bar}: indeed, the order of convergence to the invariant distribution $\mu_{\infty}$ is not improved if in~\eqref{eq:difference1iexp} $v_n$ is replaced with $u_n$, instead of $\overline u_n$. 
Notice that the new scheme \eqref{eq:u_n}-\eqref{eq:u_n_bar} has a negligible overcost compared to  \eqref{eq:v_n}. First, the postprocessor \eqref{eq:u_n_bar} needs not to be computed at each time step, but it can be computed only once at the end of each numerical trajectory. Second, computing $J_2\xi_n^Q$ at each timestep is the only additional calculation. Third, we prove that the constant $\lambda>0$ in~\eqref{eq:difference1iexp} can be chosen with the same size for both methods: thus in terms of cost (number of timesteps required to achieve a given accuracy on the left-hand side of~\eqref{eq:difference1iexp}), our new integrator performs better than the standard 
scheme \eqref{eq:v_n}.

The fact that the long-time weak order of accuracy $r$ in \eqref{eq:difference1iexp} can be made strictly larger than the short time accuracy $q$ in \eqref{eq:weakorder} is not surprising:
this is known for SDEs ({\it i.e.} in finite dimension in the terminology of this paper), see \cite{AEL08, LM13,AVZ14a,Vil15} in the context of Brownian dynamics and \cite{BRO10,AVZ15b} in the context of Langevin dynamics, where integrators with low weak order, typically $q=1$ in \eqref{eq:weakorder}, are shown to achieve a high order $r>q$ over long times \eqref{eq:difference1iexp} for sampling the invariant measure.
Inspired by these recent advances, the popular technique of processing for deterministic differential equations \cite{butcher69teo}
was recently extended to the stochastic context in \cite{Vil15}, and serves as a crucial ingredient to derive the new method proposed in this paper. In our context, this idea is to enhance the accuracy of the modified numerical method $u_n$ in \eqref{eq:u_n}
by applying a suitable change of variables $u_n\mapsto \overline u_n$ defined in \eqref{eq:u_n_bar}.  

Alternatively, note that high order integrators in the strong sense (approximation of the trajectories  instead of the distribution) for parabolic problems of the form \eqref{eq:SPDE} 
are proposed in \cite{JeK09,Jen11,JKW11}; however these schemes belong to the class of exponential integrators, while the method proposed in this paper avoids the computation of matrix exponentials. 
We also mention another natural integrator for \eqref{eq:SPDE}, which is the (stochastic) trapezoidal method (also known as the Crank-Nicolson method),
\begin{equation} \label{eq:CN}
u_{n+1} = u_n + \frac h2 A(u_{n} +u_{n+1}) + h F(u_n) + \sqrt h \xi_n^Q.
\end{equation}
However, in contrast to \eqref{eq:v_n},
the scheme \eqref{eq:CN} is not $L$-stable, a desirable property for severely stiff problems, already in the deterministic literature \cite{hairer93sod}. This makes
exponential convergence estimates of the form \eqref{eq:difference1iexp} not true in general for the scheme \eqref{eq:CN}.\looseness-3

Finally, since the space $\H$ is infinite dimensional, a space discretization scheme is required in practice, \emph{e.g.} finite differences or finite elements -- see Section~\ref{sec:num}. In this article, we only focus on the time-discretization issue.
We mention \cite{LoS07} where a postprocessing technique is applied to improve the spatial discretization in the strong sense.

\paragraph{Outline and main results}
This paper is organized as follows.
In section~\ref{sec:finitedim}, we explain the derivation of a generalized nonlinear version of the integrator~\eqref{eq:u_n}-\eqref{eq:u_n_bar} in the context of finite-dimensional SDEs, where many standard analysis tools are available compared to the SPDE context.
We prove that the new integrator has order~$2$ for the approximation of the invariant distribution of nonlinear ergodic SDEs, instead of order $1$ for the standard Euler scheme.
Section \ref{sec:SPDE} is devoted to the analysis of the integrator~\eqref{eq:u_n}-\eqref{eq:u_n_bar} for the SPDEs~\eqref{eq:SPDE}: 
we detail the abstract Hilbert space setting (Section~\ref{sec:hyp}), we show the stability and ergodicity properties of the integrator (Section~\ref{sec:timemeth}), we prove in a simplified linear case
the improved order of accuracy $\overline s +1$ of the new method for SPDEs, in contast to the order $\overline{\s}$ for the standard Euler scheme (Section~\ref{sec:analysis_order}), 
and we show that the proposed scheme exhibits the correct spatial regularity of the invariant distribution in terms of Sobolev-like spaces (Section~\ref{sec:qualitative}).
Finally, Section \ref{sec:num} is dedicated to numerical experiments which confirm the theoretical findings and illustrate the efficiency of the new method.

\section{New high order integrator: derivation and analysis in finite dimension}
\label{sec:finitedim}

In the context of SDEs in finite dimension $N\in\IN^*$, we consider the more general case of a nonlinear system of the form
\begin{equation} \label{eq:f1f2}
dX(t)= \big(f_1(X(t)) + f_2(X(t))\big) dt + \sigma dW ^Q(t),\qquad X(0)=X_0,
\end{equation}
with solution $X(t)$ in $\IR^N$. The nonlinearities $f_1,f_2:\mathbb{R}^N\rightarrow \mathbb{R}^N$ are smooth and Lipschitz functions such that $f_1(x) + f_2(x) = f_0(x)= -\nabla V_0(x)$ for some potential function $V_0:\IR^N\to \IR$, where $f_1$ is a  term to be treated implicitly and $f_2$ is a 
term to be treated explicitly. The initial condition $X_0$ is assumed deterministic for simplicity. We also define the $Q$-Wiener process $W^{Q}(t) =Q^{1/2}W(t)$ 
where $Q$ now denotes a $N\times N$ symmetric positive definite matrix
and $W(t)$ is a standard $N$-dimensional Wiener process, and $\sigma>0$ is a fixed constant. 

We introduce the following new implicit-explicit scheme for sampling with high order two the invariant measure of \eqref{eq:f1f2},
\begin{eqnarray}\label{eq:scheme_SDE}
X_{n+1} &=& X_n + h f_1\left(X_{n+1} + \frac{-2+\sqrt5}2  J_{n,2}\sigma \sqrt h \xi_n^Q\right) +  h  f_2\left(X_{n} + \frac 12 J_{n,2}\sigma \sqrt h \xi_n^Q\right)  \nonumber \\*
&+& \big(\frac{1-\sqrt2+\sqrt{5}}2 J_{n,1}^{-1} + \frac{1+\sqrt2-\sqrt{5}}2 \big)J_{n,2}\sigma \sqrt h \xi_n^Q,  \nonumber \\*
\overline X_{n} &=&  X_n + \frac{1}2 J_{n,3} \sigma\sqrt h \xi_{n}^Q,  \label{eq:newmeth} 
\end{eqnarray}
where $J_{n,1},J_{n,2},J_{n,3}$ are given by
$$
J_{n,1} = (I-hf_1'(X_n))^{-1},\ J_{n,2} = (I- \frac{3-\sqrt 2 }2  hf_1'(X_n))^{-1},\ J_{n,3}QJ_{n,3}^T = (I-\frac h2  f_1'(X_n))^{-1}.
$$

We emphasize that the matrix inverses $J_{n,1},J_{n,2},J_{n,3}$ are used only in the notations to define the scheme but should not be computed in practice.  Indeed, in practical implementations,
a LU decomposition should be used in place of computing matrix inverses. Moreover, in the semilinear case \eqref{eq:heatspdedisc}, this decomposition needs to be done only once and may be used for all further iterations.

\begin{remark} \label{rem:leimkhuler}
After a spatial discretization with finite differences or finite elements of the SPDE \eqref{eq:SPDE}, in general one arrives at a system of stiff SDEs in $\IR^N$ with large dimension $N$ of the form \eqref{eq:f1f2}, 
\begin{equation} \label{eq:heatspdedisc}
d X(t) = A X(t) dt + f(X(t)) dt + \sigma  dW^Q(t).
\end{equation}
where assume that $f(x) = -\nabla V(x)$ for some $V:\IR^N\to \IR$.
It corresponds to the special case
$f_1(x) = Ax
$
where $-A$ now denotes a $N\times N$ symmetric positive definite matrix.
In this case, the above scheme \eqref{eq:newmeth} simplifies to 
the integrator \eqref{eq:u_n}.
Moreover, in the special case $f_1=0$, the method \eqref{eq:newmeth} reduces to
\begin{equation} \label{eq:leimkhuler}
X_{n+1} = X_n + h  f\left(X_n + \frac12 \sigma \sqrt{h} \xi_n\right) 
 + \sigma \sqrt{h} \xi_n,
\qquad
\overline X_n=  X_n + \frac12 \sigma \sqrt{h} \xi_n,
\end{equation}
a method which was first proposed in \cite{LM13} and analyzed in \cite{LMT14}, 
although it was constructed in another manner using a non-Markovian formulation.
\end{remark}


For simplicity, we assume for the remaining of this section that $Q=I$ is the identity matrix, without loosing generality, applying the appropriate change of variable in (fixed) finite dimension. Section~\ref{sec:stab_SDE} is devoted to a stability analysis in the case of an Ornstein-Uhlenbeck process: we prove L-stability and exactness results, which play an important role in the efficiency of the integrator for semilinear SPDEs. Section~\ref{sec:nonlineardimfinie} contains details on the construction of the method in order to satisfy the above properties and to achieve order two of accuracy for sampling the invariant measure.

\subsection{Stability analysis}\label{sec:stab_SDE}


For the study of the stability of stochastic integrators applied to stiff SDEs with additive noise of the form \eqref{eq:f1f2},
a widely used test problem is the following scalar SDE problem (Ornstein-Uhlenbeck process)
\begin{equation} \label{eq:OU}
dX = - \lambda Xdt + \sigma dW(t),
\end{equation}
where $\lambda,\sigma>0$ are fixed constants.  
Notice that this test equation provides only a useful insight but no rigorous general conclusion on the numerical long-time behaviour for nonlinear problems with additive noise, see \cite{BRK11} and references therein. A rigorous analysis of the proposed scheme in our semilinear SPDE context is presented in Section 
\ref{sec:timemeth}.
Notice also that other test equations are used in the literature in the case of multiplicative noise, see \cite{SaM96,Hig00,SaM02,BBT04,Toc05,RaB08,BuC10}. 
Consider a one step method of the form
\begin{equation} \label{eq:XnAB}
X_{n+1} = \A(z)X_n + \B(z)\sqrt{h} \sigma \xi_n,\qquad z=-\lambda h,
\end{equation}
where $h$ is the stepsize, $\A(z),\B(z)$ are analytic functions, $\xi_n \sim \mathcal{N}(0,1)$ are independent Gaussian random variables.
The SDE \eqref{eq:OU} is ergodic with the unique invariant measure a Gaussian with mean zero and variance $\frac{\sigma^2}{2\lambda}$, {\it i.e} with density 
$\rho_\infty(x) = \sqrt{\frac{\lambda}{\pi\sigma^2}} \exp({-\frac{\lambda}{\sigma^2} x^2})$.
Indeed, for any initial condition $X_0=x$, the exact solution is a Gaussian random variable, with
$
\lim_{t\rightarrow \infty} \mathbb{E}(X(t)) = 0$ and $\lim_{t\rightarrow \infty} \mathbb{E}(|X(t)|^2) = \frac{\sigma^2}{2\lambda}.
$
The second-order moment $\mathbb{E}(|X_n|^2)$ remains bounded as $n\rightarrow \infty$ if $|\A(z)|<1$, which corresponds to the mean-square stability condition. 
The condition $|\A(z)|\leq 1$ for all $z$ with negative real part is called $A$-stability in the deterministic literature \cite{hairer93sod}. This is a desirable property of numerical integrators for stiff problems, because it permits to avoid a severe timestep size restriction. 
For $|\A(z)|<1$ (stability condition), we obtain 
\begin{equation} \label{eq:defR}
\lim_{n\rightarrow \infty} \mathbb{E}(|X_n|^2) = \frac{\sigma^2}{2\lambda} \R(z), \quad \mbox{ where } 
\R(z)=\frac{-2z\B(z)^2}{1-\A(z)^2}.
\end{equation}
We see that the method is exact (for the approximation of the invariant distribution) if and only if $\R(z)=1$ for all $z$; 
this is the case for instance for the trapezoidal method \eqref{eq:CN}, which is such that 
$
\A(z) = \frac{1+z/2}{1-z/2}, \ \B(z) = \frac{1}{1-z/2}.
$
However, in addition to $A$-stability, a desirable property of Runge-Kutta methods for very stiff problems is $L$-stability \cite{hairer93sod}, namely $\A(\infty)=0$; this is not satisfied by the trapezoidal method, for which $\A(\infty)=-1$. 
The following proposition states that for Runge-Kutta type methods, where $\A(z),\B(z)$ are rational functions, $L$-stability ({\it i.e.} $\A(\infty)=0$) is incompatible 
with the exactness for the invariant distribution, i.e. $\R(z)\equiv 1$. 
\begin{proposition}
Consider a method of the form \eqref{eq:XnAB} where $\A(z),\B(z)$ are rational functions.
If the method samples exactly the invariant distribution of \eqref{eq:OU} ({\em i.e.} $\R(z)\equiv 1$), then $|\A(\infty)|=1$. 
\end{proposition}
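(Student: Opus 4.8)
The plan is to rewrite the exactness condition $\R(z)\equiv 1$ as the rational-function identity
$$
1-\A(z)^2 = -2z\,\B(z)^2,
$$
and then to derive a contradiction from the assumption $|\A(\infty)|\neq 1$ by comparing the behaviour of the two sides as $z\to\infty$. For a nonzero rational function $G$, let $\deg_\infty G\in\ZZ$ denote its \emph{degree at infinity}, i.e.\ the unique integer such that $G(z)/z^{\deg_\infty G}$ tends to a finite nonzero limit as $z\to\infty$ (equivalently, the difference between the degrees of numerator and denominator). Since the identity above is an equality in the field of rational functions, the two sides must have the same degree at infinity, and the proof will hinge on a parity mismatch between them.

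First I would compute the degree at infinity of the right-hand side. Writing $\B(z)\sim\beta z^{m}$ with $\beta\neq 0$ and $m=\deg_\infty\B$, one gets $-2z\,\B(z)^2\sim -2\beta^2 z^{2m+1}$, so $\deg_\infty\bigl(-2z\,\B^2\bigr)=2m+1$ is always \emph{odd}. Here I use that $\B\not\equiv 0$, which follows since $\R\equiv 1$ forces $1-\A^2\not\equiv 0$ (otherwise $\R$ is not even well defined), and then the identity gives $-2z\,\B^2=1-\A^2\not\equiv 0$.

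Next I would analyse the left-hand side under the hypothesis $|\A(\infty)|\neq 1$ and show that its degree at infinity is necessarily \emph{even}. There are two cases. If $\A(\infty)=\infty$, that is $d:=\deg_\infty\A\geq 1$, then $\A^2$ dominates the constant $1$ and $\deg_\infty(1-\A^2)=2d$, which is even. If instead $\A(\infty)=a$ is finite, then (the coefficients being real) the hypothesis $|a|\neq 1$ gives $a^2\neq 1$, hence $1-\A(z)^2\to 1-a^2\neq 0$ and $\deg_\infty(1-\A^2)=0$, again even. In either case the left-hand side has even degree at infinity while the right-hand side has odd degree, which is the desired contradiction; therefore $|\A(\infty)|=1$.

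The computation is elementary; the step I expect to require the most care is the bookkeeping of the degree at infinity in the borderline case $a^2=1$, where the leading terms of $1-\A^2$ cancel and the degree drops to a negative (possibly odd) value. This is precisely the case that is \emph{not} excluded, so I must make the case split exhaustive and ensure the parity argument invokes only the two cases $|\A(\infty)|\neq 1$, never the borderline one. A secondary point is to justify at the outset that $\A\not\equiv\pm 1$ and $\B\not\equiv 0$, so that $\R$ is well defined and all degrees above make sense; both follow directly from $\R\equiv 1$ together with the defining formula \eqref{eq:defR}.
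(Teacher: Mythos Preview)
Your proof is correct and follows essentially the same parity argument as the paper's own proof: the paper states (contrapositively) that if $|\A(\infty)|\neq 1$ then $\R(z)\sim Cz^k$ with $k$ odd, which is precisely your observation that the right-hand side $-2z\B^2$ has odd degree at infinity while the left-hand side $1-\A^2$ has even degree at infinity under the hypothesis. Your write-up is simply more explicit about the case split and the well-definedness checks, but the underlying idea is identical.
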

\begin{proof}
If $\A(z)$ is a rational function with $|\A(\infty)| \ne 1$,  there exist a constant $C\in\IR$ and an odd integer $k\in\mathbb{Z}$ such that $\R(z) \sim Cz^{k}$ for $z\rightarrow \infty$, and thus $\R(z) \not\equiv 1$.
\end{proof}
However, as shown below, the above barrier for $L$-stable Runge-Kutta methods can be circumvented by applying an appropriate postprocessor to \eqref{eq:XnAB} of the form
\begin{equation}
\overline X_n = \C(z) X_n + \D(z)\sqrt{h} \sigma \xi_n,\qquad z=-\lambda h.
\end{equation}
We will see that this feature serves as a crucial ingredient in Section \ref{sec:SPDE} to achieve high order in the SPDE case.

\begin{proposition} \label{pro:exactLstable}
The method \eqref{eq:newmeth} applied to the SDE test problem \eqref{eq:OU} with $f_1(x)=-\lambda x$ and $f_2(x)=0$ is such that the scheme $X_n\mapsto X_{n+1}$ is $L$-stable and $\overline X_n$ is exact for sampling the invariant measure, {\it i.e.} for all test functions $\phi$ and all timesteps~$h$,
$
\limas_{M \rightarrow \infty}
\frac{1}{M+1}\sum_{n=0}^{M}\phi(\overline{X}_n)
=\lim_{n\rightarrow +\infty} \IE(\phi(\overline{X}_n)) = \int_{\IR}\phi(y)\rho_\infty (y)dy.
$
\end{proposition}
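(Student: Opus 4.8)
The plan is to substitute $f_1(x)=-\lambda x$ and $f_2=0$ into the scheme \eqref{eq:newmeth} and to show that, despite the elaborate coefficients, the update $X_n\mapsto X_{n+1}$ collapses to the linearized implicit Euler recursion for the Ornstein--Uhlenbeck process, while the postprocessor supplies exactly the missing variance. With $z=-\lambda h$ and $Q=I$ all the operators become scalars: $J_{n,1}^{-1}=1-z$, $J_{n,2}^{-1}=1-\frac{3-\sqrt2}{2}z$, and $J_{n,3}=(1-\frac z2)^{-1/2}$. Since $f_1$ is linear, one has $f_1\big(X_{n+1}+\frac{-2+\sqrt5}{2}J_{n,2}\sigma\sqrt h\,\xi_n\big)=-\lambda X_{n+1}-\lambda\frac{-2+\sqrt5}{2}J_{n,2}\sigma\sqrt h\,\xi_n$, so collecting the $X_{n+1}$ contributions gives $(1-z)X_{n+1}=X_n+B_0\,J_{n,2}\sigma\sqrt h\,\xi_n$ where $B_0=-\lambda h\frac{-2+\sqrt5}{2}+\frac{1-\sqrt2+\sqrt5}{2}(1-z)^{-1}|_{\text{via }J_{n,1}^{-1}}+\frac{1+\sqrt2-\sqrt5}{2}$ (with $J_{n,1}^{-1}=1+\lambda h$).

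The crux is to verify the exact algebraic cancellation $B_0=J_{n,2}^{-1}$. Indeed, the part of $B_0$ independent of $\lambda h$ sums to $\frac{1-\sqrt2+\sqrt5}{2}+\frac{1+\sqrt2-\sqrt5}{2}=1$, while the coefficient of $\lambda h$ reduces to $\frac{2-\sqrt5}{2}+\frac{1-\sqrt2+\sqrt5}{2}=\frac{3-\sqrt2}{2}$, so $B_0=1+\frac{3-\sqrt2}{2}\lambda h=J_{n,2}^{-1}$. The two $J_{n,2}$ factors then cancel and one is left with the pure linearized implicit Euler recursion $X_{n+1}=\frac{1}{1-z}\big(X_n+\sigma\sqrt h\,\xi_n\big)$, i.e. $\A(z)=\B(z)=\frac{1}{1-z}$. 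Hence $\A(\infty)=0$, which is precisely $L$-stability. This cancellation is the design-specific heart of the argument: by the preceding Proposition an $L$-stable rational Runge--Kutta method cannot itself be exact, so a nontrivial postprocessor is unavoidable, and I expect this bookkeeping (together with the variance identity below) to be the only delicate, rather than routine, computation.

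Next I would compute the stationary second moment from \eqref{eq:defR}. With $\A=\B=\frac{1}{1-z}$ one gets $\R(z)=\frac{-2z}{(1-z)^2-1}=\frac{-2z}{z^2-2z}=\frac{2}{2-z}$, so $\lim_{n\to\infty}\IE(|X_n|^2)=\frac{\sigma^2}{2\lambda}\R(z)=\frac{\sigma^2}{\lambda(2+\lambda h)}$. For the postprocessed variable $\overline X_n=X_n+\frac12 J_{n,3}\sigma\sqrt h\,\xi_n$, I would use that $X_n$ is measurable with respect to $\xi_0,\dots,\xi_{n-1}$, hence independent of $\xi_n$; the cross term then vanishes and $\lim_{n\to\infty}\IE(|\overline X_n|^2)=\lim_{n\to\infty}\IE(|X_n|^2)+\frac14 J_{n,3}^2\sigma^2 h$. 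Substituting $J_{n,3}^2=(1+\frac{\lambda h}{2})^{-1}$ and adding the two terms over the common denominator $2+\lambda h$ gives exactly $\frac{\sigma^2}{\lambda(2+\lambda h)}+\frac{\sigma^2 h}{2(2+\lambda h)}=\frac{\sigma^2}{2\lambda}$, the variance of $\rho_\infty$; this is where the particular choice of $J_{n,3}$ restores the correct stationary variance.

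Finally, to conclude exactness I would note that the chain is linear with Gaussian input, so in the stationary regime $X_n\sim\cN\big(0,\frac{\sigma^2}{\lambda(2+\lambda h)}\big)$ and, being a zero-mean Gaussian combination, $\overline X_n\sim\cN\big(0,\frac{\sigma^2}{2\lambda}\big)=\rho_\infty$, which yields $\lim_{n\to\infty}\IE(\phi(\overline X_n))=\int_\IR\phi\,\rho_\infty$. For the almost sure time-average I would invoke the ergodic theorem for the geometrically ergodic chain $(X_n)$ (ergodicity following from the contraction $|\A(z)|<1$), applied to the augmented stationary process $(X_n,\xi_n)$ whose invariant law is the product $\cN\big(0,\frac{\sigma^2}{\lambda(2+\lambda h)}\big)\otimes\cN(0,1)$; since $\overline X_n$ is the fixed measurable function $(X_n,\xi_n)\mapsto X_n+\frac12 J_{n,3}\sigma\sqrt h\,\xi_n$, the Birkhoff averages of $\phi(\overline X_n)$ converge almost surely to $\int_\IR\phi\,\rho_\infty$. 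The one mild subtlety here is that the postprocessor reads the \emph{current} noise $\xi_n$, so the ergodic average must be taken over the augmented chain rather than over $(X_n)$ alone.
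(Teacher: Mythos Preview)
Your argument is correct and follows essentially the same route as the paper: identify $\A(z)=\B(z)=(1-z)^{-1}$, $\C(z)=1$, $\D(z)=\tfrac12(1-z/2)^{-1/2}$, and verify $\overline{\R}(z)=\C(z)^2\R(z)-2z\D(z)^2\equiv 1$; you simply spell out the algebraic cancellation $B_0=J_{n,2}^{-1}$ and the ergodic-theorem justification of the time average that the paper leaves implicit. One small slip: in your expression for $B_0$ you write $(1-z)^{-1}$ for the $J_{n,1}^{-1}$ factor, but $J_{n,1}^{-1}=1+\lambda h=1-z$ (not its reciprocal); your subsequent coefficient-matching uses the correct value, so the conclusion is unaffected.
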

\begin{proof}
The method is $L$-stable because it has the same stability function $\A(z)=\frac{1}{1-z}$ as the linearized implicit Euler method \eqref{eq:v_n}.
A calculation using the above notations yields
\begin{equation} \label{eq:defRbar}
\lim_{n\rightarrow \infty} \mathbb{E}(\overline X_n^2)  = \frac{\sigma^2}{2\lambda} \overline{\R}(z), \quad \mbox{ where } 
\overline{\R}(z)=\C(z)^2\R(z)-2z\D(z)^2,
\end{equation}
with $z=-\lambda h$ and $\R(z)$ given by \eqref{eq:defR}. Noting that
$
\mathcal A(z) = \B(z) = ({1-z})^{-1},\  \C(z) = 1, \  \D(z) = \frac12({{1-z/2}})^{-1/2},
$
yields $\overline{\R}(z)\equiv 1$, which proves that the method is exact.
\end{proof}

%
%
%

%
\begin{remark} \label{rem:noncommute}
Relaxing the assumption that the matrices $A$ and $Q$ commute, the scheme \eqref{eq:newmeth}
can be adapted to remain exact for sampling the invariant measure when applied to the linear problem
$dX=AXdt + \sigma Q^{1/2}dW$ without nonlinearity ($f_1(x)=Ax,f_2(x)=0$).
In fact, the definition of matrix $J_3$ should be modified, and obtained 
solving a system of Lyapunov equations.
\end{remark}

\subsection{Construction of the integrator~\eqref{eq:newmeth}}


\label{sec:nonlineardimfinie}


We explain in this section how we construct method \eqref{eq:newmeth} with high order of accuracy for the invariant measure
by using the idea of postprocessing from \cite{Vil15}. 
Consider a system of SDEs in $\IR^N$ of the form
\begin{equation} \label{eq:e:main_sde}
dX(t) = f_0(X(t)) dt + \sigma dW(t),\qquad X(0)=X_0,
\end{equation}
where $\sigma>0$ and $\bigl(W(t)\bigr)_{t\in\IR^+}$ is a standard $d$-dimensional Wiener process.

\begin{ass} \label{ass:ergodic} We assume the following.
\begin{enumerate}
\item $f_0$ is of class $C^{\infty}$, with bounded derivatives of any order, and
there exists a potential function $V_0:\IR^N\to \IR$ such that $f_0=-\nabla V_0$; 

\item there exist $C,\beta>0$ 
such that for all $x\in \IR^N$,
$x^T f_0(x) \leq -\beta x^Tx+C$.
\end{enumerate}
\end{ass}
Then, system \eqref{eq:e:main_sde} is ergodic with a unique invariant measure $\mu_\infty$ (see e.g. \cite{Has80}) given by
$$\mu_\infty(dy)=\rho(y)dy, \quad \mbox{with } \rho(y)=\frac{1}{Z}\exp(-\frac{2}{\sigma^2}V_0(y)).$$
Moreover the solution $X(t)$ of \eqref{eq:e:main_sde} satisfies an exponential ergodicity property
analogous to \eqref{eq:expergoSPDE}.



The following theorem is the main result in \cite{Vil15} where postprocessed integrators for SDEs are introduced. It permits to improve the accuracy
of a method of weak order $q$ to order $r=q+1$ for the invariant measure. 
Notice that this theorem remains valid for general classes of SDEs with additive or multiplicative noise in multiple dimensions.
The error estimates in \cite{Vil15} rely on classical results from 
Talay and Tubaro \cite{TaT90} and Milstein \cite{Mil86} based on the backward Kolmogorov equation
(see also \cite[Chap.\ts 2.2,\ts 2.3]{MiT04}). 
In this section, we denote by $\CC_P^\infty(\IR^N,\IR)$ the set of $\CC^\infty$ functions whose derivatives
up to any order have a polynomial growth. 
We denote by $\mathcal{L}$ the infinitesimal generator of \eqref{eq:f1f2} where we set $f_0=f_1+f_2$: for any test function $\phi\in\CC_P^\infty(\IR^N,\IR)$,
\begin{equation} \label{eq:generator}
\mathcal{L}\phi= f_0\cdot \nabla\phi+\frac{\sigma^2}{2}\Delta \phi,
\end{equation}
where $\nabla \phi$ denotes the gradient of $\phi$ and $\Delta \phi$ the Laplacian of $\phi$.

\begin{theorem} \cite[Theorem\ts 4.1]{Vil15} \label{thm:main}
 Under Assumption \ref{ass:ergodic},
let $X_{n}$ be an ergodic numerical solution of \eqref{eq:e:main_sde} with bounded moments of any order $M\in \IN^*$, {\it i.e.} 
\begin{equation} \label{eq:boundedmoments}
\IE(\|X_n\|^M) \leq C_M
\end{equation}
for all $n\geq 0$, where $C_M$ is independent of $h,n$.
Assume further that the scheme has local weak order $q\geq 1$ {\it i.e.} it satisfies for all initial condition $X_0=x$ and all
$h$ sufficiently small,
\begin{equation}  
\label{weak:conv_loc}
|\IE(\phi(X_1)) -  \IE(\phi(X(h)))| \leq C(x,\phi) h^{q+1},
\end{equation}
for all $\phi\in\CC_P^\infty(\IR^N,\IR)$, where $x\mapsto C(x,\phi)$ has a polynomial growth with respect to $x$.\looseness-3

Assume also that the numerical solution admits a weak Taylor series
expansion of the form
\begin{equation} \label{eq:numin_taylor_expansion_formal}
\IE(\phi(X_1))=\phi(x)+h\mathcal{A}_0\phi(x)+h^2 \mathcal{A}_1\phi(x)+\ldots,
\end{equation}
for all $\phi \in \CC_P^\infty(\IR^N,\IR)$,
where $\mathcal{A}_i:\CC_P^\infty(\IR^N,\IR) \rightarrow \CC_P^\infty(\IR^N,\IR),~i=0,1,2,\ldots$ are linear differential operators with smooth coefficients. 
Let $G_n$ denote independent and identically distributed random maps in $\IR^N$,  independent of $\{X_{j}\}_{j\leq n}$,
with $\overline X_n = G_n(X_n)$ having bounded moments of any order,
and satisfying a weak Taylor expansion of the form
\begin{equation} \label{eq:phiG}
\IE(\phi(G_n(x))) = \phi(x) + h^q \mathcal{\overline A}_q \phi(x) + \bigo(h^{q+1}),
\end{equation}
for all $\phi \in \CC_P^\infty(\IR^N,\IR)$, where the constant in $\bigo$ has a polynomial growth with respect to $x$.
Assuming further
\begin{equation*} 
(\mathcal{A}_{q} + [\mathcal{L},\mathcal{\overline A}_q])^*\rho = 0,
\end{equation*}
where we use the commutator notation $[A,B]=AB-BA$.
Then the postprocessor $\overline X_n = G_n(X_n)$ yields an approximation of order $r=q+1$ for the invariant measure, 
\begin{eqnarray}
\left|\limas_{M \rightarrow \infty} \frac{1}{M+1}\sum_{n=0}^{M} \phi(\overline{X}_{n})-\int_{\IR^N}\phi(y)d\mu_\infty(y) \right|
&\leq& C(\phi)h^{q+1},\\
\label{eq:numexp}
\left|\IE(\phi(\overline{X}_n)) - \int_{\IR^N} \phi(y) d\mu_\infty(y) \right|
&\leq& K(\phi,x)e^{-\lambda t_n} + C(\phi)h^{q+1}
\end{eqnarray}
for all $\phi\in\CC^\infty_P(\IR^N,\IR)$ with $t_n=nh$, where $\lambda,K(\phi,x),C(\phi)$ are independent of $n$ and $h$ assumed small enough.
\end{theorem}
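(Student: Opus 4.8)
The plan is to reduce both conclusions to a statement about the invariant measure of the (un-postprocessed) Markov chain $(X_n)$, and then to exploit the algebraic hypothesis on $\mathcal{A}_q$ and $\overline{\mathcal{A}}_q$ to cancel the leading error term. First I would introduce the averaged transition operator $P_h\phi(x)=\IE(\phi(X_1)\mid X_0=x)$ and the averaged postprocessor $G_h\phi(x)=\IE(\phi(G_n(x)))$; since each $G_n$ is independent of $X_n$, conditioning gives $\IE(\phi(\overline X_n))=\IE\big((G_h\phi)(X_n)\big)$. Writing $\mu_h$ for the invariant measure of $(X_n)$, which exists and is unique by the assumed ergodicity with $P_h^*\mu_h=\mu_h$, I would split
\begin{equation*}
\IE(\phi(\overline X_n))-\int_{\IR^N}\phi\,d\mu_\infty=\Big(\IE\big((G_h\phi)(X_n)\big)-\int_{\IR^N}G_h\phi\,d\mu_h\Big)+\Big(\int_{\IR^N}G_h\phi\,d\mu_h-\int_{\IR^N}\phi\,d\mu_\infty\Big).
\end{equation*}
The first bracket is a transient term that decays like $K(\phi,x)e^{-\lambda t_n}$ by exponential convergence of the numerical chain to $\mu_h$ (using the bounded-moment and ergodicity hypotheses together with the spectral gap coming from Assumption \ref{ass:ergodic}), so the whole problem reduces to showing that the invariant-measure bias in the second bracket is $\bigo(h^{q+1})$.

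For that bias I would establish the Talay--Tubaro expansion of the numerical invariant density. From \eqref{eq:numin_taylor_expansion_formal} and local weak order $q$, the matched coefficients force $\mathcal{A}_{j-1}=\tfrac{1}{j!}\mathcal{L}^{j}$ for $1\le j\le q$, so that
\begin{equation*}
P_h\phi-\phi=\sum_{j=1}^{q}\frac{h^{j}}{j!}\mathcal{L}^{j}\phi+h^{q+1}\mathcal{A}_q\phi+\bigo(h^{q+2}).
\end{equation*}
Seeking the invariant density as $\rho_h=\rho+h^{q}\rho_1+\bigo(h^{q+1})$ and inserting it into the stationarity identity $\int(P_h\phi-\phi)\rho_h\,dy=0$, the orders $h^{1},\dots,h^{q}$ vanish because $(\mathcal{L}^*)^{j}\rho=0$, and collecting the order $h^{q+1}$ terms (only the $j=1$ term pairs with $\rho_1$) yields the Poisson equation
\begin{equation*}
\mathcal{L}^*\rho_1=-\mathcal{A}_q^*\rho,
\end{equation*}
which is solvable since $\int \mathcal{A}_q^*\rho\,dy=\langle\rho,\mathcal{A}_q 1\rangle=0$ because the scheme preserves constants.

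With this expansion, and since $G_h\phi=\phi+h^{q}\overline{\mathcal{A}}_q\phi+\bigo(h^{q+1})$, a direct computation gives
\begin{equation*}
\int_{\IR^N} G_h\phi\,d\mu_h-\int_{\IR^N}\phi\,d\mu_\infty=h^{q}\int_{\IR^N}\phi\,\big(\overline{\mathcal{A}}_q^*\rho+\rho_1\big)\,dy+\bigo(h^{q+1}),
\end{equation*}
so it suffices to prove $\overline{\mathcal{A}}_q^*\rho+\rho_1=0$. This is exactly where the commutator hypothesis enters: taking adjoints and using $\mathcal{L}^*\rho=0$ gives $[\mathcal{L},\overline{\mathcal{A}}_q]^*\rho=-\mathcal{L}^*\overline{\mathcal{A}}_q^*\rho$, so the assumption $(\mathcal{A}_q+[\mathcal{L},\overline{\mathcal{A}}_q])^*\rho=0$ reads $\mathcal{L}^*\overline{\mathcal{A}}_q^*\rho=\mathcal{A}_q^*\rho=-\mathcal{L}^*\rho_1$. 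Hence $\mathcal{L}^*(\overline{\mathcal{A}}_q^*\rho+\rho_1)=0$, and since the kernel of $\mathcal{L}^*$ is spanned by $\rho$ we obtain $\overline{\mathcal{A}}_q^*\rho+\rho_1=c\rho$; integrating and using $\int\rho_1\,dy=0$ and $\int\overline{\mathcal{A}}_q^*\rho\,dy=\langle\rho,\overline{\mathcal{A}}_q 1\rangle=0$ forces $c=0$. This closes \eqref{eq:numexp}, and the time-averaged estimate follows from the same invariant-measure identity combined with a law of large numbers for the chain $(\overline X_n)$.

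The main obstacle is not the algebra above but the rigorous justification of the expansion $\rho_h=\rho+h^{q}\rho_1+\bigo(h^{q+1})$ with a remainder uniform in $n$: one must establish existence, uniqueness and regularity of $\mu_h$, solvability and regularity of the Poisson equations for $\mathcal{L}$ and $\mathcal{L}^*$ (via the spectral gap from Assumption \ref{ass:ergodic}), and control of all remainder terms through the polynomial-growth and bounded-moment hypotheses. This is precisely the Talay--Tubaro/Milstein machinery invoked from \cite{TaT90,Mil86,MiT04}, and making these estimates uniform over long times is the technical heart of \cite[Theorem\ts 4.1]{Vil15}.
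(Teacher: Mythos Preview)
The paper does not prove this theorem: it is quoted verbatim from \cite[Theorem~4.1]{Vil15} and used as a black box in the proofs of Theorem~\ref{thm:order2} and Lemma~\ref{lemma:order2}. So there is no ``paper's own proof'' to compare against, and your proposal is really a sketch of the argument behind the cited result.

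That said, your formal argument is sound. The decomposition into a transient piece plus an invariant-measure bias, the identification $\mathcal{A}_{j-1}=\tfrac{1}{j!}\mathcal{L}^{j}$ for $j\le q$ from local weak order, and especially the algebraic computation showing that $(\mathcal{A}_q+[\mathcal{L},\overline{\mathcal{A}}_q])^*\rho=0$ forces $\overline{\mathcal{A}}_q^*\rho+\rho_1=0$ (via $\mathcal{L}^*\rho=0$, the adjoint of the commutator, and the one-dimensionality of $\ker\mathcal{L}^*$) are all correct and capture exactly why postprocessing gains an order.

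One methodological remark: you organise the expansion around the numerical invariant \emph{density} $\rho_h=\rho+h^q\rho_1+\bigo(h^{q+1})$, whereas the standard Talay--Tubaro route (and the one typically used in \cite{Vil15,TaT90}) works dually with test functions, solving the Poisson equation $\mathcal{L}\psi=\phi-\langle\phi\rangle$ and telescoping $\IE\psi(X_{n+1})-\IE\psi(X_n)$ along the numerical trajectory. The two viewpoints are formally equivalent, but the test-function route avoids having to prove that $\mu_h$ has a smooth density admitting a uniform asymptotic expansion, which is a genuinely harder analytic statement than the Poisson-equation estimates you already invoke. You flag this yourself in the final paragraph, and that is indeed where the real work lies.
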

\begin{remark}
We emphasize that the boundedness of moment condition \eqref{eq:boundedmoments} can be easily proved for Runge-Kutta type methods, such as the proposed method \eqref{eq:newmeth},
following the methodology of \cite{Mil86} (see also \cite[Chap.\ts 2.2]{MiT04}) using the global Lipschitz continuity of the SDE fields. 
We also refer to \cite{Vil15} where this assumption is discussed in the context of postprocessed integrators
for SDEs. 
Notice also that the Lipschitz condition on the SDE fields and the ergodicity
assumption on the numerical method could be relaxed using the concept of rejecting exploding trajectories, as introduced in \cite{MT05}, see also \cite{MT07} in the context of ergodic SDEs.
\end{remark}

We are now in position to state the main result of this section.
We show that the new method \eqref{eq:newmeth} satisfies the assumptions of Theorem \ref{thm:main} with $q=1$, and thus has order $r=2$ of accuracy for the invariant measure of ergodic SDEs.

\begin{theorem} \label{thm:order2}
 Under Assumption \ref{ass:ergodic}, consider the method \eqref{eq:newmeth} with postprocessor $\overline X_n$. Assume that $X_n$ is ergodic when applied to the system \eqref{eq:f1f2}. Then $\overline X_n$ has order two of accuracy for the invariant measure, precisely,
\begin{eqnarray} \label{eq:th1}
\left| \limas_{M \rightarrow \infty} \frac{1}{M+1}\sum_{k=0}^{M} \phi(\overline X_{k})-\int_{\IR^N}\phi(y)d\mu_\infty(y) \right| &\leq& C(\phi) h^{2}
\\ \label{eq:th2}
\left|\IE(\phi(\overline X_n)) - \int_{\IR^N} \phi(y) d\mu_\infty(y) \right| &\leq& K(\phi,x)e^{-\lambda t_n} + C(\phi) h^{2}, 
\end{eqnarray}
for all $\phi\in\CC^\infty_P(\IR^N,\IR)$ with $t_n=nh$, where $\lambda,K(\phi,x),C(\phi)$ are independent of $n$ and $h$ assumed small enough.
\end{theorem}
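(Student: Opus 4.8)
The plan is to verify that the scheme \eqref{eq:newmeth} together with its postprocessor $\overline X_n$ satisfies all the hypotheses of Theorem~\ref{thm:main} with $q=1$; the conclusions \eqref{eq:th1}--\eqref{eq:th2} then follow at once. Since $f_1,f_2$ are smooth and globally Lipschitz with bounded derivatives and the method is of Runge--Kutta type, the boundedness of moments \eqref{eq:boundedmoments} follows from the standard argument of \cite{Mil86} recalled in the preceding remark, and ergodicity of $X_n$ is assumed in the statement. It therefore remains to produce the weak Taylor expansions \eqref{eq:numin_taylor_expansion_formal} and \eqref{eq:phiG}, to check the local weak order $q=1$ in \eqref{weak:conv_loc}, and --- the decisive point --- to verify the compatibility identity $(\mathcal{A}_{1}+[\mathcal{L},\overline{\mathcal A}_1])^*\rho=0$.

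First I would expand one step of the scheme started at $X_0=x$. Using the Neumann expansions $J_{n,1}=I+\bigo(h)$ and $J_{n,2}=I+\bigo(h)$, I solve the implicit relation defining $X_1$ as a formal series in $\sqrt h$ of the form $X_1=x+\sigma\sqrt h\,\xi+h\,f_0(x)+\bigo(h^{3/2})$, continued up to order $h^2$, the coefficients being polynomial in $\xi$ and in the derivatives of $f_1,f_2$ at $x$ (note the leading noise coefficient equals $\tfrac{1-\sqrt2+\sqrt5}{2}+\tfrac{1+\sqrt2-\sqrt5}{2}=1$). Substituting into $\phi(X_1)$, Taylor expanding $\phi$, and taking expectation with $\IE\xi=0$, $\IE[\xi\xi^T]=I$ and the vanishing of odd Gaussian moments, I read off the coefficients of $h$ and $h^2$ as $\mathcal{A}_0$ and $\mathcal{A}_1$. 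I expect the $h$-coefficient to be exactly $\mathcal{A}_0\phi=f_0\cdot\nabla\phi+\frac{\sigma^2}{2}\Delta\phi=\mathcal{L}\phi$, which yields consistency and hence the local weak order $q=1$ of \eqref{weak:conv_loc}; the $h^2$-coefficient defines $\mathcal{A}_1$, a differential operator of order up to four (the fourth-order part coming from the quartic moments of $\sigma\sqrt h\,\xi$) whose coefficients are polynomial in $f_1',f_2',f_0''$ and in the numerical constants of the method. For the postprocessor $G_n(x)=x+\tfrac12 J_{n,3}\sigma\sqrt h\,\xi$, which is built from fresh noise independent of the past and thus fits the framework of Theorem~\ref{thm:main}, expanding $\phi(G_n(x))$ and using $J_{n,3}J_{n,3}^T=(I-\tfrac h2 f_1'(x))^{-1}=I+\bigo(h)$ gives $\IE(\phi(G_n(x)))=\phi(x)+\tfrac{\sigma^2}{8}h\,\Delta\phi(x)+\bigo(h^2)$, so that \eqref{eq:phiG} holds with $\overline{\mathcal A}_1=\frac{\sigma^2}{8}\Delta$.

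The heart of the proof is then the identity $(\mathcal{A}_{1}+[\mathcal{L},\overline{\mathcal A}_1])^*\rho=0$. Since $\overline{\mathcal A}_1=\frac{\sigma^2}{8}\Delta$ is a constant multiple of the Laplacian, the commutator reduces to $[\mathcal{L},\overline{\mathcal A}_1]=\frac{\sigma^2}{8}[f_0\cdot\nabla,\Delta]$, which is explicit. Rather than testing the adjoint on an arbitrary density, I would exploit the gradient structure $f_0=-\nabla V_0$ and $\rho\propto e^{-2V_0/\sigma^2}$, for which $\nabla\rho=\frac{2}{\sigma^2}f_0\,\rho$: pairing $(\mathcal{A}_{1}+[\mathcal{L},\overline{\mathcal A}_1])\phi$ against $\rho$ for all test $\phi$ and integrating by parts repeatedly, the identity collapses to an algebraic relation among the constants $\tfrac{-2+\sqrt5}{2},\ \tfrac12,\ \tfrac{3-\sqrt2}{2},\ \tfrac{1\pm\sqrt2\mp\sqrt5}{2}$. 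I expect this to be the main obstacle: computing $\mathcal{A}_1$ is long and error-prone because of the implicit stage, the $h$-dependence hidden inside $J_{n,1},J_{n,2},J_{n,3}$, and the several fourth-order terms produced by the quartic Gaussian moments, and one must organize the computation so that, after integration by parts against $\rho$, every term cancels. These constants were precisely engineered in Section~\ref{sec:nonlineardimfinie} so that this cancellation occurs, and the exactness/$L$-stability facts of Proposition~\ref{pro:exactLstable} provide a useful consistency check of the algebra in the linear case $f_0(x)=-\lambda x$.
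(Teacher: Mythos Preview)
Your plan is correct and ultimately rests on the same mechanism as the paper: verify the hypotheses of Theorem~\ref{thm:main} with $q=1$, in particular identify $\mathcal A_1$, show $\overline{\mathcal A}_1=\tfrac{\sigma^2}{8}\Delta$, and check $(\mathcal A_1+[\mathcal L,\overline{\mathcal A}_1])^*\rho=0$ by integration by parts against $\rho\propto e^{-2V_0/\sigma^2}$.

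The organization differs, however. The paper does \emph{not} expand \eqref{eq:newmeth} directly. Instead it introduces an auxiliary family of schemes \eqref{eq:abc} with free real parameters $a_1,a_2,a_3,b_1,b_2,c$ in which the operators $J_{n,1},J_{n,2},J_{n,3}$ do not appear; it computes $\mathcal A_1$ and $\overline{\mathcal A}_1$ for that family and derives algebraic order conditions \eqref{eq:ordercond} on the parameters (Lemma~\ref{lemma:order2}). The actual proof of Theorem~\ref{thm:order2} is then reduced to a two-line comparison: replacing $\xi_n$ by $J_{n,2}\xi_n=(I+\bigo(h))\xi_n$ inside the arguments of $f_1,f_2$, substituting $I+a_3hf_1'(x)$ by $(\tfrac{1-\sqrt2+\sqrt5}{2}J_{n,1}^{-1}+\tfrac{1+\sqrt2-\sqrt5}{2})J_{n,2}$, and replacing $\xi_n$ by $J_{n,3}\xi_n$ in the postprocessor, one gets $X_1-Y_1=R(x)\xi\,h^{5/2}+\bigo(h^3)$ and $\overline X_0-\overline Y_0=\bigo(h^{3/2})$, hence the weak expansions (and thus $\mathcal A_1,\overline{\mathcal A}_1$) coincide with those of \eqref{eq:abc}--\eqref{eq:defabc}. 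This buys a cleaner derivation of $\mathcal A_1$ (no $h$-dependent $J$'s to unravel) and, as a by-product, an explanation of where the constants $\tfrac{-2+\sqrt5}{2},\tfrac12,\tfrac{3-\sqrt2}{2}$ come from. Your direct route is equally valid but forces you to carry the Neumann expansions of $J_{n,1},J_{n,2},J_{n,3}$ through the whole $\mathcal A_1$ computation, which is exactly the ``long and error-prone'' obstacle you anticipated.
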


The proof of Theorem \ref{thm:order2} relies on the following lemma where conditions of order two of accuracy are derived for a perturbation for the linearized Euler method. The proof of this lemma is postponed to the Appendix.
\begin{lemma} \label{lemma:order2}
Consider the following modification of the linearized Euler scheme for \eqref{eq:f1f2},
\begin{eqnarray}
Y_{n+1} &=& Y_n + h f_1\left(Y_{n+1} + a_1 \sigma \sqrt h \xi_n\right) + h f_2(Y_{n} + a_2 \sigma \sqrt h \xi_n)  + (I+a_3hf_1'(Y_n)) \sigma \sqrt h \xi_n \nonumber \\
\overline Y_n &=& Y_n + b_1 h f_1(\overline Y_n) +  b_2 h f_2(Y_{n})  + c \sigma \sqrt h \xi_n.  \label{eq:abc}
\end{eqnarray}
where $a_1,a_2,a_3,b_1,b_2,c$ are fixed real coefficients.
If the following conditions hold,
\begin{align}
&a_1^2+2 a_1 +b_1 -c^2 = 0, \qquad
a_1 +a_3 +\frac14 +b_1 -c^2 = 0, \qquad
a_2^2 +b_2 -c^2 = 0, \nonumber\\
&-\frac14 + a_2 +b_2  -c^2 = 0, \qquad
(b_2-b_1)[f_2,f_1] =0, \label{eq:ordercond}
\end{align}
then, assuming the ergodicity of $Y_n$ in \eqref{eq:abc}, the postprocessed scheme \eqref{eq:abc} satisfies the assumptions of Theorem \ref{thm:main} with $q=1$, and $\overline Y_n$ has order two of accuracy for the invariant measure, {\it i.e.} it satisfies \eqref{eq:th1},\eqref{eq:th2} (with $\overline X_n$ replaced by $\overline Y_n$).
\end{lemma}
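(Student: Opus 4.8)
The plan is to verify the hypotheses of Theorem~\ref{thm:main} with $q=1$ for the pair $(Y_n,\overline Y_n)$ of \eqref{eq:abc}, since the conclusions \eqref{eq:th1}--\eqref{eq:th2} then follow immediately. The boundedness of moments \eqref{eq:boundedmoments} and the existence of the weak Taylor expansion \eqref{eq:numin_taylor_expansion_formal} are standard for a Runge--Kutta type scheme with smooth globally Lipschitz fields and follow the methodology of Milstein cited after Theorem~\ref{thm:main}; ergodicity of $Y_n$ is assumed. Thus the substance of the proof is to (i) compute the one-step expansion operators $\mathcal{A}_0,\mathcal{A}_1$ of $Y_n$, (ii) compute the operator $\overline{\mathcal{A}}_1$ of the postprocessor, and (iii) show that the conditions \eqref{eq:ordercond} are equivalent to local weak order one ($\mathcal{A}_0=\mathcal{L}$) together with the order-two criterion $(\mathcal{A}_1+[\mathcal{L},\overline{\mathcal{A}}_1])^*\rho=0$.

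For step (i), starting from $Y_0=x$ I would solve the implicit relation for the increment $\delta=Y_1-x$ as a power series in $\sqrt h$ by fixed-point iteration, initialized at $\delta=\sigma\sqrt h\,\xi_0$ and substituted into the right-hand side of \eqref{eq:abc} with $f_1,f_2$ Taylor-expanded about $x$, keeping all terms through order $h^2$. Writing $\IE(\phi(x+\delta))=\phi(x)+\IE(\nabla\phi\cdot\delta)+\tfrac12\IE(\nabla^2\phi[\delta,\delta])+\cdots$ and taking expectations with the Gaussian rules (odd moments of $\xi_0$ vanish, $\IE(\xi_0\xi_0^T)=I$, fourth moments by Wick's formula; half-integer powers of $h$ then drop out of the expectation), the $O(h)$ coefficient reads $\mathcal{A}_0\phi=f_0\cdot\nabla\phi+\tfrac{\sigma^2}2\Delta\phi=\mathcal{L}\phi$, which holds for all parameter values and establishes local weak order one \eqref{weak:conv_loc}. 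The $O(h^2)$ coefficient then yields $\mathcal{A}_1$ explicitly as a second-order differential operator whose coefficients are built from $f_1,f_2,f_0=f_1+f_2$, their first and second derivatives, $\sigma$, and the parameters $a_1,a_2,a_3$.

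For step (ii), the analogous but simpler expansion of the implicit postprocessor $\overline Y_0=x+b_1 h f_1(\overline Y_0)+b_2 h f_2(x)+c\sigma\sqrt h\,\xi_0$ gives $\IE(\phi(\overline Y_0))=\phi+h\overline{\mathcal{A}}_1\phi+O(h^2)$ with $\overline{\mathcal{A}}_1\phi=(b_1 f_1+b_2 f_2)\cdot\nabla\phi+\tfrac{c^2\sigma^2}2\Delta\phi$, matching \eqref{eq:phiG}. For step (iii), I would use that the invariant density $\rho\propto\exp(-2V_0/\sigma^2)$ satisfies $\mathcal{L}^*\rho=0$ and $\nabla\rho=\tfrac2{\sigma^2}f_0\,\rho$. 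Since $\mathcal{L}^*\rho=0$, one has $([\mathcal{L},\overline{\mathcal{A}}_1])^*\rho=-\mathcal{L}^*(\overline{\mathcal{A}}_1^*\rho)$, so the order-two criterion reduces to $\mathcal{A}_1^*\rho=\mathcal{L}^*(\overline{\mathcal{A}}_1^*\rho)$; it is convenient to subtract the exact-flow contribution $\tfrac12\mathcal{L}^2$ (which automatically satisfies $(\tfrac12\mathcal{L}^2)^*\rho=0$ and cancels the fourth-order $\Delta^2$ part of $\mathcal{A}_1$) so as to isolate the scheme's leading local error. Expanding both sides against $\rho$ by integration by parts, and grouping the resulting terms by the tensors built from $f_1',f_2',f_1'',f_2''$ and by their differential order, the scalar matching of coefficients produces exactly the four algebraic identities of \eqref{eq:ordercond}; the contributions that cannot be symmetrized collapse into a single term proportional to the vector-field commutator, forcing $(b_2-b_1)[f_2,f_1]=0$, which is the fifth condition.

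The main obstacle is the bookkeeping in step (iii): taking the formal adjoint of the second-order operator $\mathcal{A}_1$ against the Gibbs density while correctly tracking the non-commuting matrix-valued coefficients $f_1'$ and $f_2'$. The delicate point is to verify that, once the four scalar relations hold, the only surviving obstruction to $\mathcal{A}_1^*\rho=\mathcal{L}^*(\overline{\mathcal{A}}_1^*\rho)$ is the Lie bracket $[f_2,f_1]$ (which vanishes when $f_1,f_2$ commute, e.g.\ in the linear case $f_1(x)=Ax$ relevant to \eqref{eq:u_n}), so that $(b_2-b_1)[f_2,f_1]=0$ is both necessary and sufficient to close the argument.
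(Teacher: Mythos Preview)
Your proposal is correct and follows essentially the same route as the paper's proof. The only presentational differences are that the paper works on the dual side, testing against $\phi$ and showing $\langle(\mathcal{A}_1+[\mathcal{L},\overline{\mathcal{A}}_1])\phi\rangle=0$ via the integration-by-parts identities \eqref{eq:estimparts} (rather than computing adjoints against $\rho$ directly), and that it does not separately subtract $\tfrac12\mathcal{L}^2$ but instead lets those ``exact'' terms cancel through the same integration-by-parts manipulations; the resulting algebra and the five conditions \eqref{eq:ordercond} emerge identically.
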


Note that the Lie bracket $[f_2,f_1]=f_2'f_1-f_1'f_2$ involved in the second order conditions \eqref{eq:ordercond}
vanishes only when the flows associated to the fields $f_1,f_2$ commute, which is not true in general. We thus impose $b_1=b_2$. Still, the system \eqref{eq:ordercond} has infinitely many solutions.
Setting $b_1=b_2=0$ for simplicity of the postprocessor, two solutions remain.
Choosing the solution which minimizes the absolute value of $a_1$ and $a_3$, we obtain the following choice of coefficients for the order two scheme \eqref{eq:abc},
\begin{equation} \label{eq:defabc}
a_1 = -a_3 = \frac{-2+\sqrt5}2, \qquad a_2 = c = \frac 12,\qquad b_1=b_2=0.
\end{equation}

\begin{proof}[Proof of Theorem \ref{thm:order2}]
It is sufficient to prove that
the method \eqref{eq:newmeth} satisfies the same expansions \eqref{eq:numin_taylor_expansion_formal} and 
\eqref{eq:phiG} with $q=1$ as method \eqref{eq:abc},\eqref{eq:defabc}  (with the same differential operators $\mathcal{A}_0=\mathcal{L}$, $\mathcal{A}_1$, and $\mathcal{\overline A}_1$). 
Indeed,  applying Theorem \ref{thm:main}  with $q=1$  to the scheme \eqref{eq:newmeth}, we deduce that method \eqref{eq:newmeth} also has second order of accuracy for the invariant measure, which concludes the proof of Theorem \ref{thm:order2}. 

To recover the scheme \eqref{eq:newmeth} from \eqref{eq:abc},\eqref{eq:defabc}, in the first line of \eqref{eq:abc} one has to
  replace $\xi_n$ with $J_{n,2}\xi_n=(I+\bigo(h))\xi_n$ in the arguments of $f_1,f_2$ and also to substitute $I+a_3 h  f_1'(x)$ with
$$
\big(\frac{1-\sqrt2+\sqrt{5}}2 J_{n,1}^{-1} + \frac{1+\sqrt2-\sqrt{5}}2 \big)J_{n,2} = I+a_3 h f_1'(x) + \bigo(h^2);
$$
in the second line of \eqref{eq:abc}, one has to replace $\xi_n$ with $J_{n,3}\xi_n=(I+\bigo(h))\xi_n$. We obtain that the difference between one step of \eqref{eq:newmeth} and one step of \eqref{eq:abc},\eqref{eq:defabc} with initial condition $X_0=Y_0=x$ has the form $X_1-Y_1 = R(x)\xi h^{5/2} + \bigo(h^3)$. Using $\IE(\xi)=0$, we deduce $\IE(\phi(X_1))-\IE(\phi(Y_1))=\bigo(h^3)$, while
$\IE(\phi(\overline X_0))-\IE(\phi(\overline Y_0))=\bigo(h^2)$.
\end{proof}


It can be seen from the proof of Theorem \ref{thm:order2} that the operator $J_{n,2}$ in front of $\xi_n$ and the operator $J_{n,3}$ in the definition of the method  \eqref{eq:newmeth} have no influence on its order two of accuracy for the invariant measure
in finite dimension. In infinite dimension, however, these operators play an important role for the well-posedness, the stability and the accuracy of the scheme in the SPDE case presented in Section~\ref{sec:SPDE}.


\section{Analysis in the SPDE case}
\label{sec:SPDE}

\subsection{Abstract setting and assumptions}
\label{sec:hyp}

The state space in the SPDE case is an infinite dimensional separable Hilbert space  $\H$, for which we denote by $\langle \cdot, \cdot \rangle$ the scalar product, and by $|\cdot|$ the associated norm.
%
%
%
%
%
%
%
Consider the linear operator $A$ involved in the parabolic SPDE \eqref{eq:SPDE}.
Recall that we assume that $-A$ is an unbounded self-adjoint linear operator with eigenvalues
$0<\lambda_1\leq \ldots\leq \lambda_{p}\leq  \lambda_{p+1} \leq \ldots$, such that $\lambda_p\rightarrow +\infty,$ when $p\rightarrow +\infty$,
and associated normalized eigenvectors $e_p$ (such that $Ae_p=-\lambda_p e_p$), which form a complete orthonormal system in $\H$.

For any ${\s}\in\IR^+$, we classically define the unbounded linear operator $(-A)^{\s/2}$ from $\H$ to $\H$ and its domain $\H^{\s}\subset \H$ as follows:
$$
(-A)^{\s/2}u=\sum_{p=1}^{+\infty}\langle u,e_p\rangle \lambda_{p}^{\s/2}e_p \quad \mbox{for all } u\in \H^{\s}=\{ u\in \H ~:~ |u|_{\s}^{2}=\sum_{p=1}^{+\infty}|\langle u,e_p\rangle|^2\lambda_{p}^{\s}<+\infty\}.
$$
We also define the bounded linear operator $(-A)^{-\s/2}$ 
and the semi-group $\bigl(e^{tA}\bigr)_{t\in\IR^+}$, both as linear operators from $\H$ to $\H$ by 
$$
(-A)^{-\s/2}u=\sum_{p=1}^{+\infty}\langle u,e_p\rangle \lambda_{p}^{-\s/2}e_p, \quad \quad
e^{tA}u=\sum_{p=1}^{+\infty}\exp(-t\lambda_p)\langle u,e_p\rangle e_p.
$$
It is straightforward that for any $t\in(0,+\infty)$ we have $e^{tA}\in \mathcal{L}(\H,\H)$ -- the space of bounded linear operators from $\H$ to $\H$, endowed with the norm denoted by $\|\cdot\|$ -- with $\|e^{tA}\|\leq \exp(-\lambda_1 t)$. Moreover, the following regularization property holds true:
$\| (-A)^{\s/2}e^{tA}\|\leq \frac{C_{\s}}{t^{\s/2}}$
where $C_{\s}^{2}=\sup_{r\in \IR^+} \exp(-2r)r^{\s}\in(0,+\infty)$.

\paragraph{Covariance operator}
We assume that $Q$ is a bounded, non-negative self-adjoint linear operator from $\H$ to $\H$, which satisfies
$Qe_p=q_pe_p$
for any $p\in\IN^*$, where the eigenvalues $\bigl(q_p\bigr)_{p\in\IN^*}$ form a bounded sequence of non-negative real numbers. We assume that condition \eqref{eq:conditionTrace} is satisfied. The $Q$-Wiener process in \eqref{eq:SPDE} is then defined as follows: for any $t\geq0$,
\begin{equation}\label{eq:Q-Wiener}
W^{Q}(t)=\sum_{p=1}^{+\infty}\sqrt{q_p}\beta_p(t)e_p,
\end{equation}
where $\bigl(\beta_p\bigr)_{p\in\IN^*}$ is a sequence of independent standard scalar Wiener processes on an underlying probability space $(\Omega,\mathcal{F},\mathbb{P})$.


Notice that the operators $A$ and $Q$ commute: $AQu=QAu$ for any $u\in \H^2$. This property is often assumed in the literature, and simplifies the analysis of the order of convergence made in Section~\ref{sec:analysis_order}. Nevertheless several arguments (especially Proposition~\ref{pro:ergo_cont} and the results of Section~\ref{sec:timemeth}) do not require this property; in particular, the scheme remains well-defined in the non-commuting case.

%
%

\paragraph{Nonlinearity}
The nonlinear coefficient $F$ is assumed to be a Lipschitz continuous function from $\H$ to $\H$, with Lipschitz constant $L$ satisfying the dissipation condition
\begin{equation}\label{ass:L_mu}
{L}=\sup_{u^1\neq u^2 \in \H}\frac{|F(u^1)-F(u^2)|}{|u^1-u^2|}  < \min_{p\in\IN^*}\lambda_p = \lambda_1.
\end{equation}
This condition ensures ergodicity of the continuous-time process (Proposition~\ref{pro:ergo_cont}) and of its time-discretized approximations (Proposition~\ref{pro:invar}). A typical example of such a Lipschitz function on $\H=L^2(\mathcal{D})$ -- where $\mathcal{D}$ is an open smooth bounded domain in $\IR^d$ -- is the Nemytskii operator
$F: u\mapsto f \circ u$, where $f:\IR\to\IR$ is a globally Lipschitz function. 
Note that $F=-DV$ is the derivative of the potential function $V:L^2(\mathcal{D})\rightarrow \IR$ 
where $V(u) =- \int_0^1\int_{\mathcal{D}}    u(x) f(\theta u(x)) dx d\theta$.

%
%


Under the above hypotheses, and assuming the trace condition \eqref{eq:conditionTrace}, 
the process $\bigl(u(t)\bigr)_{t\in\IR^+}$ takes values in $\H^{\s}$ for any $\s<\overline{\s}$, and we recall without proof the following result of exponential convergence to a unique invariant distribution, see e.g. \cite{DPZ96} for general results, and \cite[Section 3.1.1]{D13}.
\begin{proposition}\label{pro:ergo_cont}
Assume \eqref{eq:conditionTrace} and the above hypotheses. 
Then, the process $\bigl(u(t)\bigr)_{t\in\IR^+}$ solution of \eqref{eq:SPDE} admits a unique invariant probability distribution $\mu_{\infty}$ on $\H$.
Moreover for all $\s<\overline{\s}$, 
\begin{equation*}
\int_{\H}|u|_{\s}^{2}\mu_{\infty}(du)<+\infty,
\end{equation*}
and for all $\phi:\H\rightarrow \IR$ Lipschitz continuous, and all $t>0$,
$$\Big| \mathbb{E}\bigl[ \phi(u(t)) \bigr]-\int_{\H}\phi(v)\mu_{\infty}(dv) \Big| \leq 
C(\phi,u_0)e^{-(\lambda_1-{L})t},$$
where $C(\phi,u_0)$ is independent of $t$.
\end{proposition}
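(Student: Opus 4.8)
The plan is to exploit the dissipativity condition \eqref{ass:L_mu} through a pathwise contraction estimate, and to use the trace condition \eqref{eq:conditionTrace} for the regularity and existence statements. First I would take two mild solutions $u^1,u^2$ of \eqref{eq:SPDE} driven by the same noise $W^Q$ but started from initial data $u_0^1,u_0^2$. Since the noise is additive, the difference $w=u^1-u^2$ solves the random \emph{deterministic} evolution equation $w'(t)=Aw(t)+F(u^1(t))-F(u^2(t))$, with no stochastic forcing. Differentiating $|w(t)|^2$, using the self-adjointness and spectral bound $\langle Av,v\rangle\le-\lambda_1|v|^2$ together with the Lipschitz estimate $|F(u^1)-F(u^2)|\le L|w|$, yields
\begin{equation*}
\tfrac{1}{2}\tfrac{d}{dt}|w(t)|^2\le-(\lambda_1-L)|w(t)|^2,
\end{equation*}
so that Gronwall's lemma gives the key contraction $|u^1(t)-u^2(t)|\le e^{-(\lambda_1-L)t}|u_0^1-u_0^2|$ almost surely, with positive rate $\lambda_1-L>0$ under \eqref{ass:L_mu}.

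From this contraction, uniqueness and the exponential estimate follow almost immediately. For a Lipschitz test function $\phi$, writing $P_t\phi(x)=\IE[\phi(u(t))]$ for the solution started at $x$, the contraction gives $|P_t\phi(x)-P_t\phi(y)|\le\mathrm{Lip}(\phi)\,e^{-(\lambda_1-L)t}|x-y|$. If $\mu_\infty$ is any invariant measure with finite first moment, then integrating in the $y$ variable against $\mu_\infty$ and using invariance $\int P_t\phi\,d\mu_\infty=\int\phi\,d\mu_\infty$ produces
\begin{equation*}
\Bigl|\IE[\phi(u(t))]-\int_\H\phi\,d\mu_\infty\Bigr|\le\mathrm{Lip}(\phi)\,e^{-(\lambda_1-L)t}\Bigl(|u_0|+\int_\H|y|\,\mu_\infty(dy)\Bigr),
\end{equation*}
which is the claimed bound and simultaneously forces uniqueness of $\mu_\infty$.

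The existence of $\mu_\infty$ and the regularity moment bound are where the real work lies, and this is the main obstacle. I would establish a uniform-in-time bound $\sup_{t\ge0}\IE|u(t)|_\s^2<\infty$ for every $\s<\overline{\s}$. The decisive estimate is for the stochastic convolution $Z(t)=\int_0^t e^{(t-s)A}\,dW^Q(s)$: by the It\^o isometry and the spectral representation,
\begin{equation*}
\IE|Z(t)|_\s^2=\sum_{p}q_p\lambda_p^\s\int_0^t e^{-2\lambda_p(t-s)}\,ds\le\tfrac{1}{2}\sum_p q_p\lambda_p^{\s-1}=\tfrac{1}{2}\,{\rm Trace}\bigl((-A)^{-1+\s}Q\bigr),
\end{equation*}
which is finite exactly when $\s<\overline{\s}$. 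Bounding the deterministic drift term in \eqref{eq:mild_solution} with the regularization property $\|(-A)^{\s/2}e^{tA}\|\le C_\s t^{-\s/2}$ and the linear growth of $F$, then closing a Gronwall argument using the dissipativity, gives the uniform moment bound. Tightness of $\{\mathrm{Law}(u(t))\}_{t\ge0}$ on $\H$ then follows from the compact embedding $\H^\s\hookrightarrow\H$ (a consequence of $\lambda_p\to\infty$), and the Krylov--Bogoliubov theorem produces the invariant measure; its moment bound $\int_\H|u|_\s^2\,\mu_\infty(du)<\infty$ passes to the limit by lower semicontinuity (Fatou). The difficulty is thus not the contraction, which is elementary, but the careful uniform-in-time control of the stochastic convolution and of the mild formulation in the fractional spaces $\H^\s$, precisely calibrated to the threshold $\overline{\s}$.
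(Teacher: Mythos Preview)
The paper does not actually prove this proposition: it is explicitly ``recall[ed] without proof'' with references to \cite{DPZ96} and \cite[Section 3.1.1]{D13}. Your sketch is precisely the standard argument behind those references, so in that sense there is nothing to compare.

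Your outline is correct, with one minor technical caveat. When you write $w'(t)=Aw(t)+F(u^1(t))-F(u^2(t))$ and differentiate $|w(t)|^2$, you are implicitly treating $w$ as a strong solution, whereas in this setting $u^1,u^2$ are only mild solutions and $A$ is unbounded. The cleanest way to make this rigorous is either to pass through Galerkin approximations, or to bypass differentiation altogether and estimate directly from the mild formulation
\begin{equation*}
|w(t)|\le e^{-\lambda_1 t}|w(0)|+L\int_0^t e^{-\lambda_1(t-s)}|w(s)|\,ds,
\end{equation*}
which by Gronwall gives the same contraction $|w(t)|\le e^{-(\lambda_1-L)t}|w(0)|$. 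Also, in your uniqueness step you assume the invariant measure has finite first moment; this is not circular, since the uniform bound $\sup_{t\ge0}\IE|u(t)|_\s^2<\infty$ you establish for existence transfers to any invariant measure, but it is worth stating explicitly.
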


Condition \eqref{ass:L_mu} is crucial for the proof of the uniqueness of the numerical invariant distributions established in the next section: 
we compare the solutions starting from different initial conditions and driven with the same noise process, and show an exponential contraction similar to the result 
of Proposition~\ref{pro:ergo_cont}. 
Notice that weaker conditions than \eqref{ass:L_mu} are known in the literature (see e.g. \cite{D13} and references therein) 
to ensure the ergodicity and the exponential convergence of \eqref{eq:SPDE} -- for instance when $F$ is bounded and Lipschitz continuous with no size restriction on ${L}$.



\subsection{Stability and ergodicity of the integrator for SPDEs}
\label{sec:timemeth}

In this section, we prove the existence and uniqueness of invariant distributions for the time-discretized  processes defined by the numerical method, for any time-step size $h>0$, in a general setting. 
Notice that the results of this section do not require the gradient assumption $F=-DV$.
The results are analogous to classical results for the $\theta$-method  in the context of stiff SDEs \cite[Theorem 3.1]{BRK11} and
for the linearized implicit Euler method \eqref{eq:u_n} in the context of SPDEs as studied e.g. in \cite[Remark\ts4.8]{B14}.
It is a key observation here to exploit that
the sequence $(u_n,\overline{u}_{n-1})_{n\in\IN}$ defining the new scheme \eqref{eq:u_n},\eqref{eq:u_n_bar} is a Markov chain on the product space $\H\times\H$. The initial condition $(u_0,\overline{u}_{-1})$ is given by $u_0=u(0)=u_0$ and an arbitrary $\overline{u}_{-1}\in \H$ which plays no role in the dynamics, since $(u_{n+1},\overline{u}_n)$ depends only on $u_n$ and $\xi_{n}^{Q}$, not on $\overline{u}_{n-1}$.

In the following proposition,
we state uniform bounds -- with respect to $n\in \IN$ and $h\in(0,1)$ -- on first-order moments for the norm $|\cdot|_{\s}$  for $\s<\overline{\s}$.

\begin{proposition}\label{pro:moments}
Assume the hypotheses of Section \ref{sec:hyp} and consider the scheme \eqref{eq:u_n},\eqref{eq:u_n_bar}. 
For all $\s\in[0,\overline{\s})$, assuming $u_0\in \H^{\s}$, there exists a constant $C_{\s}\in(0,+\infty)$ such that for all $h\in(0,1)$,
$$
\sup_{n\in\IN}\IE\big|u_n\big|_{\s}\leq C_{\s}(1+|u_0|_{\s}), \qquad \sup_{n\in\IN}\IE\big|\overline u_n\big|_{\s}\leq C_{\s}(1+|u_0|_{\s}).$$
\end{proposition}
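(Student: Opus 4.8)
The plan is to establish the uniform moment bound by exploiting the explicit structure of the scheme \eqref{eq:u_n}, treating separately the linear (Gaussian) part driven by the noise and the nonlinear contribution. First I would rewrite one step of the recursion \eqref{eq:u_n} in the abstract form
\begin{equation*}
u_{n+1} = J_1 u_n + h J_1 F\Bigl(u_n + \tfrac12\sqrt h J_2 \xi_n^Q\Bigr) + \sqrt h\, K \xi_n^Q,
\end{equation*}
where $K$ collects the noise operators $\tfrac{\sqrt2-1}{2}J_1 J_2 + \tfrac{3-\sqrt2}{2}J_2$. The key quantitative facts are: (i) since $-A$ has eigenvalues $\lambda_p\ge\lambda_1>0$, the resolvent $J_1=(I-hA)^{-1}$ satisfies $\|J_1\|\le (1+h\lambda_1)^{-1}\le 1$, and more precisely $\|(-A)^{\s/2}J_1\|$ and $\|J_1\|$ are contractive on $\H^\s$; (ii) $J_2$ is likewise bounded by $1$ on each $\H^\s$. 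The point is that the linearized implicit Euler operator $J_1$ is a strict contraction with factor roughly $(1+h\lambda_1)^{-1}$, which will drive the geometric decay.

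The main step is to apply $(-A)^{\s/2}$, take $\H^\s$-norms, and use the triangle inequality together with the Lipschitz bound on $F$. Writing $\gamma = \|J_1\|_{\mathcal L(\H^\s)}\le (1+h\lambda_1)^{-1}$ and using $|F(v)|_\s \le |F(0)|_\s + L|v|_\s$ from \eqref{ass:L_mu}, I would obtain a recursion of the shape
\begin{equation*}
\IE|u_{n+1}|_\s \le \gamma\,(1+hL)\,\IE|u_n|_\s + h\,C_1 + \sqrt h\,\IE|K\xi_n^Q|_\s.
\end{equation*}
The contraction constant $\gamma(1+hL)$ is the crucial quantity: using $L<\lambda_1$ from \eqref{ass:L_mu} and $\gamma\le(1+h\lambda_1)^{-1}$, one gets $\gamma(1+hL)\le \frac{1+hL}{1+h\lambda_1}\le 1 - h\,c$ for some $c>0$ (roughly $c=\lambda_1-L$) uniformly in $h\in(0,1)$, so the recursion is a geometric contraction with per-step factor $1-hc$. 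Summing the geometric series yields $\sup_n \IE|u_n|_\s \le C_\s(1+|u_0|_\s)$ provided the remaining two forcing terms are bounded uniformly in $h$.

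The heart of the argument, and what I expect to be the main obstacle, is controlling the stochastic forcing term $\sqrt h\,\IE|K\xi_n^Q|_\s$ and showing that after summation it contributes an $h$-independent constant. Here the trace condition \eqref{eq:conditionTrace} enters decisively. Since $\xi_n^Q$ has the same law as $h^{-1/2}(W^Q((n+1)h)-W^Q(nh))$, one has $\IE|K\xi_n^Q|_\s^2 = \mathrm{Trace}\bigl((-A)^\s K Q K^*\bigr)$, and using $\IE|K\xi_n^Q|_\s\le (\IE|K\xi_n^Q|_\s^2)^{1/2}$, the task reduces to estimating this trace. The operators $K$ and $J_2$ behave like $(I-chA)^{-1}$, so on the $p$-th eigenmode the contribution is of order $q_p\lambda_p^\s (1+ch\lambda_p)^{-2}$; summing over $p$ and then multiplying by $\sqrt h$ and dividing by the contraction gap (which is of order $h$) must produce a bound uniform in $h$. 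The delicate point is that $\sum_p q_p\lambda_p^\s$ diverges (it equals $\mathrm{Trace}((-A)^\s Q)$, which is infinite when $\s<\overline\s$ but $\s$ is close to $\overline\s$), so the factor $(1+ch\lambda_p)^{-2}$ and the careful bookkeeping of the $\sqrt h$ and the $h^{-1}$ from summing the geometric series are essential; this is exactly the regularization estimate $\|(-A)^{\s/2}e^{tA}\|\le C_\s t^{-\s/2}$ transcribed to the discrete resolvent $J_1^n$. Concretely, I would split the sum over modes $p$ according to whether $h\lambda_p\lesssim 1$ or $h\lambda_p\gtrsim 1$, bound $\mathrm{Trace}((-A)^{-1+\s'}Q)<\infty$ for some $\s<\s'<\overline\s$ using \eqref{eq:conditionTrace}, and absorb the extra power of $\lambda_p$ into the decay of the discrete resolvent. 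Once the stochastic term is controlled uniformly, the bound for $\overline u_n$ follows immediately from \eqref{eq:u_n_bar} since $\overline u_n - u_n = \tfrac12\sqrt h J_3\xi_n^Q$ and the same trace estimate (now with $J_3 Q J_3^* = (I-\tfrac h2 A)^{-1}Q$) gives $\IE|\tfrac12\sqrt h J_3\xi_n^Q|_\s \le C_\s$, so $\sup_n\IE|\overline u_n|_\s \le \sup_n\IE|u_n|_\s + C_\s$.
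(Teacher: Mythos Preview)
There is a genuine gap in your argument for $\s>0$. You invoke ``$|F(v)|_{\s}\le |F(0)|_{\s}+L|v|_{\s}$ from \eqref{ass:L_mu}'', but the hypotheses of Section~\ref{sec:hyp} only give that $F:\H\to\H$ is Lipschitz in the $\H$-norm; nothing is assumed about $F$ mapping $\H^{\s}$ to $\H^{\s}$, and for a Nemytskii operator this is typically false without extra smoothness on $f$. Consequently the recursion $\IE|u_{n+1}|_{\s}\le \gamma(1+hL)\IE|u_n|_{\s}+\ldots$ cannot be written down as stated. The paper deals with this by first establishing the bound for $\s=0$ (where your contraction argument is essentially correct), which yields $\sup_n\IE|F(u_n+\tfrac12\sqrt h J_2\xi_n^Q)|\le C(1+|u_0|)$ in $\H$, and only then bootstraps to $\H^{\s}$ via the discrete regularization estimate $\|(-A)^{\s/2}J_1^{k}\|\le C_{\s}(kh)^{-\s/2}$ applied to the mild formulation $d_n=J_1^n u_0 + h\sum_{k=0}^{n-1}J_1^{n-k}F(\cdots)$. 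You mention this regularization estimate, but you try to use it on the noise rather than on the nonlinear term, which is where it is actually needed.

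A second, related issue is your treatment of the noise. Iterating a one-step recursion and summing $\sqrt h\,\IE|K\xi_n^Q|_{\s}$ against the geometric weight $(1-hc)^{n-k}$ forces you to bound $h^{-1}\cdot\sqrt h\,\IE|K\xi_n^Q|_{\s}$, and a computation using \eqref{eq:conditionTrace} shows this blows up like a negative power of $h$ for $\s$ close to $\overline\s$. The triangle inequality loses the independence of the $\xi_k^Q$. The paper instead isolates the linear Gaussian part $\ell_n=\sqrt h\,(\text{bounded op.})\sum_{k=0}^{n-1}J_1^{n-k}\xi_k^Q$ and bounds $\IE|\ell_n|_{\s}^2$ directly: by independence the cross terms vanish, and one obtains $h\sum_{k\ge1}\sum_p q_p\lambda_p^{\s}(1+\lambda_p h)^{-2k}\le C\,{\rm Trace}\bigl((-A)^{-1+\s}Q\bigr)$ uniformly in $h$. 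Once $\ell_n$ is controlled in $\H^{\s}$, the remainder $d_n=u_n-\ell_n$ is handled as described above. Your final step for $\overline u_n$ via the $J_3$ trace bound is fine.
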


\begin{proof}
Thanks to \eqref{eq:conditionTrace}, $\text{Trace}\bigl(J_{i}QJ_{i}\bigr)<+\infty$ for $i\in\left\{1,2,3\right\}$, and thus $u_n$ and $\overline{u}_n$ are well-defined in $\H$ for all $n\in\IN$. 
The contributions of the drift part and of the stochastic perturbation are treated separately: we introduce the auxiliary process $\bigl(\ell_n\bigr)_{n\in\IN}$, as the solution of the following equation
\begin{equation*}
\ell_{n+1}=J_1\ell_{n}+\sigma\sqrt{h}\bigl(\frac{\sqrt{2}-1}{2}J_1+\frac{3-\sqrt{2}}{2}I\bigr)J_2\xi_{n}^{Q},
\end{equation*}
with $\ell_0=0$. Set $d_n=u_n-\ell_n$ for all $n\in\IN$; then $d_0=u_0$ and
$$d_{n+1}=J_1d_n+hJ_1F(d_n+\ell_n+\frac{1}{2}J_2\sigma\sqrt{h}\xi_{n}^{Q}).$$
The quantity $\ell_n$ satisfies for all $n\in\IN^*$ the identity
$$\ell_{n}=\sigma\sqrt{h}\bigl(\frac{\sqrt{2}-1}{2}J_1+\frac{3-\sqrt{2}}{2}I\bigr)J_2J_{1}^{-1}\sum_{k=0}^{n-1}\bigl(J_1\bigr)^{n-k}\xi_{k}^{Q}.$$
Observe that $\bigl(\frac{\sqrt{2}-1}{2}J_1+\frac{3-\sqrt{2}}{2}I\bigr)J_2J_{1}^{-1}$ is a bounded linear operator from $\H^{\s}$ to $\H^{\s}$ with norm less than $\frac{2}{3-\sqrt{2}}$, for $\s\in [0,\overline{s})$. Since the Gaussian random variables $\bigl(\xi_{k}^{Q}\bigr)_{k\in\IN}$ are independent, for all $0\leq \s\leq \overline{\s}$,
\begin{align*}
\IE\big|\ell_n\big|_{\s}^{2}&\leq \frac{2\sigma^2 h}{3-\sqrt{2}} \sum_{k=0}^{n-1}\IE\big|\bigl(J_1\bigr)^{n-k}\xi_{k}^{Q}\big|_{\s}^{2}\leq \frac{2\sigma^2 h}{3-\sqrt{2}} \sum_{k=1}^{+\infty}\sum_{p=1}^{+\infty}\frac{q_p\lambda_{p}^{\s}}{(1+\lambda_p h)^{2k}}\\
&\leq \frac{2\sigma^2}{3-\sqrt{2}} \sum_{p=1}^{+\infty}q_p\lambda_{p}^{\s-1}\frac{\lambda_p h}{(1+\lambda_p h)^2-1}\leq \frac{\sigma^2}{3-\sqrt{2}}{\rm Trace}\bigl((-A)^{-1+\s}Q\bigr).
\end{align*}
Now thanks to \eqref{ass:L_mu}, straightforward computations show that 
\begin{align*}
\IE|d_{n}|&\leq \frac{1+hL}{1+\lambda_1 h}\IE|d_{n-1}|+\frac{h}{1+\lambda_1 h}|F(0)|+\frac{Lh}{1+\lambda_1 h}\bigl(\IE|\ell_{n-1}|+\IE\big|\frac{1}{2}J_2\sigma\sqrt{h}\xi_{n-1}^{Q}\big|\bigr)\\
&\leq \frac{(1+hL)^{n}}{(1+\lambda_1 h)^{n}}|u_0|+\frac{|F(0)|}{\lambda_1-L}+\frac{L}{\lambda_1-L}\bigl(\sup_{k\in\IN}\IE|\ell_k|+\frac{1}{3-\sqrt{2}}\sigma\bigl({\rm Trace}\bigl((-A)^{-1}Q\bigr)\bigr)^{1/2}\bigr),
\end{align*}
with $ \frac{(1+hL)}{(1+\lambda_1 h)}\leq 1- \frac{\lambda_1-L}{1+\lambda_1 h}h \leq \exp\left(-\frac{(\lambda_1-L)}{1+\lambda_1 h} h\right)<1$.


As a consequence the claim follows for $(u_n)_{n\in\IN}$ in the case $\s=0$. In particular, for some constant $C\in(0,+\infty)$ it comes that $\sup_{n\in\IN}\IE\big|F(u_n+\frac{1}{2}J_2\sigma\sqrt{h}\xi_{n}^{Q})\big|<C(1+|u_0|)$.

The case $\s\in(0,\overline{\s})$ is treated using the estimates of Lemma $3.2$ in \cite{B14}: for $n\in\IN$
\begin{align*}
\IE\big|d_n\big|_{\s}&=\IE\big|(-A)^{\s/2}(J_1)^n u_0+h\sum_{k=0}^{n-1}(-A)^{\s/2}(J_1)^{n-k}F(u_k+\frac{1}{2}J_2\sigma\sqrt{h}\xi_{k}^{Q})\big|\\
&\leq \frac{1}{(1+\lambda_1 h)^n}|u_0|_{\s}+hC(1+u_0|)\sum_{k=1}^{n}\bigl(\frac{\mathds{1}_{kh\leq 1}}{(kh)^{\s/2}}+\mathds{1}_{kh>1}\frac{C_{\s}}{(1+\lambda_1 h)^{k-\lfloor 1/h \rfloor}}\bigr)\\
&\leq C_{\s}(1+|u_0|_{\s}).
\end{align*}

Finally, for $n\in\IN$, 
$$\IE\big|\frac{1}{2}J_3\sigma \sqrt{h}\xi_{n}^{Q}\big|_{\s}^{2}=\frac{\sigma^2 h}{4}{\rm Trace}\bigl((I-\frac{h}{2}A)^{-1}(-A)^{\s}Q\bigr)\leq \frac{\sigma^2}{2}{\rm Trace}\bigl((-A)^{-1+\s}Q\bigr).$$
This concludes the proof, since $\overline{u}_n=u_n+\frac{1}{2}J_3\sigma \sqrt{h}\xi_{n}^{Q}$.
\end{proof}

We now state the following existence and uniqueness result for the invariant distribution of the Markov chain $\bigl(u_n,\overline{u}_{n-1}\bigr)_{n\in\IN}$.
We prove that the convergence is exponentially fast, in contrast to the trapezoidal method \eqref{eq:CN} which is not $L$-stable and for which such exponential estimate does not hold in general for stiff problems (even in finite dimension). 
\begin{proposition}\label{pro:invar}
Assume the hypotheses of Section~\ref{sec:hyp}. For any $h\in(0,1)$, the $\H\times \H$-valued Markov chain $\bigl(u_n,\overline{u}_{n-1}\bigr)_{n\in\IN}$ admits a unique invariant distribution ${\rm M}_{\infty}^{h}$ 
in $\H\times \H$, with marginals in $\H$ denoted by $\mu_{\infty}^{h}$ and $\overline{\mu}_{\infty}^{h}$, respectively.

Moreover, the convergence of the distributions to equilibrium is exponentially fast: for all Lipschitz test function $\varphi:\H\rightarrow \mathbb{R}$, and all $t_n=nh$,
\begin{gather*}
\Big|\IE\varphi(u_n)-\int_{\H}\varphi d\mu_{\infty}^{h}\Big| +
\Big|\IE\varphi(\overline{u}_n)-\int_{\H}\varphi d\overline{\mu}_{\infty}^{h}\Big|\leq C(\varphi,|u_0|)
\exp\left(-\frac{(\lambda_1-L)}{1+\lambda_1 h} t_n\right).
\end{gather*}
\end{proposition}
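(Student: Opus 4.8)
The plan is to establish existence, uniqueness and the exponential rate simultaneously through a \emph{synchronous coupling} argument in the Wasserstein-$1$ metric $\mathcal{W}_1$, exploiting that the noise enters \eqref{eq:u_n},\eqref{eq:u_n_bar} additively and that $J_1$ is $L$-stable. First I would set up the coupling: for two initial conditions $u_0,\tilde u_0\in\H$, run two copies $(u_n)$ and $(\tilde u_n)$ of \eqref{eq:u_n} driven by the \emph{same} realization of the noise $(\xi_n^Q)_{n\geq 0}$. Subtracting the two recursions, the terms $\tfrac{\sqrt2-1}{2}\sqrt h J_2\xi_n^Q$ and $\tfrac{3-\sqrt2}{2}\sqrt h J_2\xi_n^Q$ cancel exactly, and the shared noise $\tfrac12\sqrt h J_2\xi_n^Q$ cancels inside the argument of $F$, leaving
$$u_{n+1}-\tilde u_{n+1}=J_1\Big((u_n-\tilde u_n)+h\big(F(u_n+\tfrac12\sqrt h J_2\xi_n^Q)-F(\tilde u_n+\tfrac12\sqrt h J_2\xi_n^Q)\big)\Big).$$
Since $\|J_1\|=(1+\lambda_1 h)^{-1}$, combining with the Lipschitz/dissipation bound \eqref{ass:L_mu} gives the pathwise contraction
$$|u_{n+1}-\tilde u_{n+1}|\leq \frac{1+hL}{1+\lambda_1 h}\,|u_n-\tilde u_n|\leq \exp\!\Big(-\frac{\lambda_1-L}{1+\lambda_1 h}h\Big)|u_n-\tilde u_n|.$$
Because the noise also cancels in \eqref{eq:u_n_bar}, one has $\overline u_n-\tilde{\overline u}_n=u_n-\tilde u_n$, so the very same contraction holds for the pair $(u_n,\overline u_{n-1})$ on $\H\times\H$ (the inert component $\overline u_{n-1}$ lagging by one step). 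Iterating yields, for $t_n=nh$, a $\mathcal{W}_1$-contraction of the transition semigroup with the exact rate $\exp(-\tfrac{\lambda_1-L}{1+\lambda_1 h}t_n)$ claimed in the statement.

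Next I would deduce uniqueness and the convergence estimate. Uniqueness of $M_\infty^h$ is immediate: any two invariant measures, coupled as above, remain at a $\mathcal{W}_1$-distance that contracts to $0$, hence coincide. For the quantitative estimate, I would couple the chain started at the deterministic $u_0$ with a stationary copy distributed according to $M_\infty^h$; since $\varphi$ is Lipschitz, $\big|\IE\varphi(u_n)-\int_\H\varphi\,d\mu_\infty^h\big|$ is bounded by $\mathrm{Lip}(\varphi)\,\IE|u_n-\tilde u_n|$, which the contraction controls by $\exp(-\tfrac{\lambda_1-L}{1+\lambda_1 h}t_n)$ times $\IE|u_0-\tilde u_0|\leq C(1+|u_0|)$, the last bound coming from the moment estimate of Proposition~\ref{pro:moments}. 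The corresponding bound for the $\overline u_n$-marginal follows at once from $\overline u_n-\tilde{\overline u}_n=u_n-\tilde u_n$, so the two contributions on the left-hand side of the stated inequality are handled together.

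Finally, for existence I would avoid any compactness/tightness argument and instead show directly that $(\mathrm{Law}(u_n))_n$ is Cauchy in $\mathcal{W}_1$. By the Markov property and the semigroup contraction,
$$\mathcal{W}_1\big(\mathrm{Law}(u_{n+m}),\mathrm{Law}(u_n)\big)\leq r^n\,\mathcal{W}_1\big(\mathrm{Law}(u_m),\delta_{u_0}\big),\qquad r=\frac{1+hL}{1+\lambda_1 h}<1,$$
and the right-hand factor equals $\IE|u_m-u_0|\leq C(1+|u_0|)$ uniformly in $m$ by Proposition~\ref{pro:moments}. As $\H$ is Polish, the space of probability measures with finite first moment is complete for $\mathcal{W}_1$, so this Cauchy sequence converges to a limit that is invariant by continuity of the transition operator; the product-space statement on $\H\times\H$ follows verbatim from the same pair-contraction. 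The main subtlety, rather than a deep obstacle, is to verify that the synchronous coupling genuinely cancels \emph{all} noise contributions — precisely where the additive structure of \eqref{eq:u_n},\eqref{eq:u_n_bar} and the identical driving $\xi_n^Q$ are essential — and to keep the first-moment bounds of Proposition~\ref{pro:moments} available throughout so that $\mathcal{W}_1$ remains finite.
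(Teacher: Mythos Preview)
Your synchronous coupling argument for uniqueness and for the exponential rate is exactly the argument the paper gives: same cancellation of all additive noise terms, same one-step contraction factor $\frac{1+hL}{1+\lambda_1 h}\leq \exp\bigl(-\frac{(\lambda_1-L)h}{1+\lambda_1 h}\bigr)$, same observation that $\overline u_n-\tilde{\overline u}_n=u_n-\tilde u_n$, and the same conclusion by coupling with a stationary copy.

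The genuine difference is in the \emph{existence} step. The paper does not use the Wasserstein--Cauchy argument; instead it invokes the Krylov--Bogoliubov criterion, obtaining tightness of the Ces\`aro averages on $\H\times\H$ from the $|\cdot|_{\s}$--moment bounds of Proposition~\ref{pro:moments} together with the compact embedding $\H^{\s}\hookrightarrow\H$ (for $0<\s<\overline{\s}$), and the Feller property of the transition semigroup. Your route is a legitimate alternative and, in a sense, more economical: once the $\mathcal{W}_1$-contraction is established, completeness of $(\mathcal{P}_1(\H),\mathcal{W}_1)$ and the uniform first-moment bound from Proposition~\ref{pro:moments} (only the case $\s=0$ is needed) give existence directly, without appealing to the higher-regularity bounds or to the compact embedding. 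The paper's approach, on the other hand, keeps existence and uniqueness conceptually separate and makes explicit use of the smoothing that is anyway proved in Proposition~\ref{pro:moments}.
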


\begin{proof}
\textit{Existence.}
The semi-group $\bigl(P^n\bigr)_{n\in\IN}$ on $\H\times\H$ generated by the Markov chain $\bigl(u_n,\overline{u}_{n-1}\bigr)_{n\in\IN}$ satisfies the Feller property: for any $n\in\IN$, for any bounded continuous test function $\phi:\H\times\H\rightarrow \mathbb{R}$, the map
$(u_0,\overline{u}_{-1})\mapsto P^n\phi(u_0,\overline{u}_{-1})=\IE\phi(u_n,\overline{u}_{n-1})$
is continuous.
The claim then follows from the standard Krylov-Bogoliubov criterion (see Section~3.1 in \cite{DPZ96}): given an arbitrary initial condition $(u_0,\overline{u}_{-1})\in\H\times\H$, if ${\rm M}_n$ denotes the law of $(u_n,\overline{u}_{n-1})$, then
\begin{itemize}
\item $\bigl(\frac{1}{n+1}\sum_{k=0}^{n}{\rm M}_k\bigr)_{n\in\IN}$ is a {\em tight} sequence of probability distributions on $\H\times\H$ -- as a consequence of Proposition~\ref{pro:moments} combined with the Markov inequality, and of the observation that for any $\s\in(0,\overline{\s})$ and any $R>0$, the set $\left\{|u|_{\s}\leq R , |\overline{u}|_{\s}\leq R\right\}$ is a compact subset of $\H\times\H$.
\item every subsequence limit point ${\rm M}$ is an invariant distribution for the semi-group.
\end{itemize}

\textit{Uniqueness.}
Consider two initial conditions $u_0^{1},u_0^{2}\in \H$, as well as $\overline{u}_{-1}^{1},\overline{u}_{-1}^{2}\in\H$ and the associated processes $\bigl(u_n^{i}\bigr)_{n\in\IN}$ and $\bigl(\overline{u}_n^{i}\bigr)_{n\in\IN}$, for $i=1,2$, defined by \eqref{eq:u_n}, \eqref{eq:u_n_bar}, and driven by a unique noise process $\bigl(\xi_n^Q\bigr)_{n\in\IN}$. 

Then by Lipschitz continuity of $F$, and using the cancellations of several noise terms,  computations similar to those of the proof of Proposition~\ref{pro:moments} yield for any $n\in\IN$ the almost sure contraction property
\begin{equation*}
\big|\overline{u}_{n}^{1}-\overline{u}_{n}^{2}\big| = \big|u_{n}^{1}-u_{n}^2\big|\leq \frac{1+Lh}{1+\lambda_1 h}\big|u_{n-1}^1-u_{n-1}^2\big|
\leq \exp\left(-\frac{(\lambda_1-L)}{1+\lambda_1 h} t_{n}\right) \big|u_0^{1}-u_0^{2}\big|. 
\end{equation*}
Finally, taking $(u_0^2,\overline{u}_{-1}^2)$ random, independent of the noise process $\bigl(\xi_n^Q\bigr)_{n\in\IN}$ and distributed according to an ergodic invariant distribution ${\rm M}_{\infty}^{h}$ gives the exponential convergence and the uniqueness properties.
\end{proof}

\subsection{Analysis of the order of convergence: a simplified linear case}\label{sec:analysis_order}

It is shown in \cite{B14} (for $d=1$ and $\overline{\s}=1/2$, associated with space-time white noise $Q=I$),
that the standard linearized implicit Euler scheme \eqref{eq:u_n} has order $r=1/2-\eps$
for all $\eps\in(0,1/2)$ for the approximation of the invariant distribution $\mu_{\infty}$ of \eqref{eq:SPDE}.
In this section, we show that the postprocessed scheme has the improved order of convergence $\overline s+1-\eps$.
Since the techniques from Section~\ref{sec:finitedim} do not extend straightforwardly to the SPDE case, we only focus on a simplified case, where the nonlinear coefficient $F$ is replaced with a bounded linear operator. 
Numerical experiments of Section~\ref{sec:num} show that the higher order is preserved for various examples of nonlinearities $F$.
%
%
%
%
%
%

In addition to the hypotheses of Section \ref{sec:hyp}, assume that the coefficient $F$ is given by a linear mapping: for any $u\in\H$,  $F(u)=Bu$ where $B\in\mathcal{L}(\H)$ satisfies $Be_p=-b_pe_p$ for all $p\in\IN^*$, with real eigenvalues $b_p\in(-\lambda_1,\lambda_1)$ (due to condition \eqref{ass:L_mu}):
\begin{equation}\label{eq:SPDE_B}
du(t)=Au(t)dt+Bu(t)dt+\sigma dW^Q(t) \quad , \quad u(0)=u_0.
\end{equation}
In this situation, the components $\langle u(t),e_p\rangle$ in the basis $\{e_p\}_{p\in \mathbb{N}^*}$ of the solution $\bigl(u(t)\bigr)_{t\in\IR^+}$ of the SPDE \eqref{eq:SPDE} are independent Ornstein-Uhlenbeck processes. Similarly, the components in the basis $\{e_p\}_{p\in \mathbb{N}^*}$ of the discrete-time processes $\bigl(v_n\bigr)_{n\in\IN}$ (resp. $\bigl(u_n\bigr)_{n\in\IN}$, resp. $\bigl(\overline{u}_n\bigr)_{n\in\IN}$) are also independent processes.

As a consequence, explicit expressions for the invariant distributions $\mu_{\infty}$, $\nu_{\infty}^{h}$, $\mu_{\infty}^{h}$ and $\overline{\mu}_{\infty}^{h}$ are available: they are centered Gaussian probability measures on $\H$,
\begin{eqnarray*}
\mu_{\infty}&=&\mathcal{N}\bigl(0,Q_{\infty}\bigr) \quad , \quad Q_{\infty}=\frac{\sigma^2}{2}Q(-A-B)^{-1}, \\
\nu_{\infty}^{h}&=&\mathcal{N}\bigl(0,Q_{\infty,\nu}^{h}\bigr) \quad , \quad  Q_{\infty,\nu}^{h}=\frac{\sigma^2}{2}Q(-A-B)^{-1}\bigl(I-h\frac{A-B}{2}\bigr)^{-1},\\  
\mu_{\infty}^{h}&=&\mathcal{N}\bigl(0,Q_{\infty}^{h}\bigr) \quad , \quad \overline{\mu}_{\infty}^{h}=\mathcal{N}\bigl(0,\overline{Q}_{\infty}^{h}\bigr).
\end{eqnarray*}
The expressions for $Q_{\infty}^{h}$ and $\overline{Q}_{\infty}^{h}$ being more complicated are displayed in \eqref{eq:express_Q_h} below.

The following lemma is a key elementary tool in order to exhibit the order of convergence as $h\rightarrow 0$ of the approximating measures towards $\mu_{\infty}$.

\begin{lemma}\label{lem:ordre_Trace}
Let $\pi_j=\mathcal{N}(0,Q_j)$, $j\in\left\{1,2\right\}$ be two Gaussian probability distributions on the Hilbert space $\H$. 
Assume that for all $p\in\IN^*$, $j\in\{1,2\}$ $Q_je_p=q_{j,p}e_p$. 
Let $\varphi\in\mathcal{C}^{2}(\H,\IR)$ satisfy that $\sup_{v\in \H}\|D^2\varphi(v)\|<+\infty$. Then
\begin{equation}
\Big| \int_{\H}\varphi d\pi_2-\int_{\H}\varphi d\pi_1 \Big| \leq \frac{\sup_{v\in \H}\|D^2\varphi(v)\|}{2}\sum_{p=1}^{+\infty}\big|q_{2,p}-q_{1,p}\big|.
\end{equation}
\end{lemma}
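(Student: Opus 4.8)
The plan is to reduce the infinite-dimensional statement to a sum of one-dimensional Gaussian estimates by exploiting the assumed simultaneous diagonalization of $Q_1,Q_2$ in the eigenbasis $(e_p)$. First I would compare the two measures through an interpolation argument: introduce the family of Gaussian measures $\pi_t=\mathcal{N}(0,(1-t)Q_1+tQ_2)$ for $t\in[0,1]$, so that $\pi_0=\pi_1$ and $\pi_1=\pi_2$, and write
\begin{equation*}
\int_{\H}\varphi\,d\pi_2-\int_{\H}\varphi\,d\pi_1 = \int_{0}^{1}\frac{d}{dt}\Big(\int_{\H}\varphi\,d\pi_t\Big)\,dt.
\end{equation*}
Because all covariance operators share the eigenvectors $e_p$ with eigenvalues $(1-t)q_{1,p}+tq_{2,p}$, each $\pi_t$ is a product of one-dimensional centered Gaussians along the $e_p$ directions, and the derivative in $t$ acts componentwise.

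Next I would use the classical heat-equation identity for Gaussians: for a centered Gaussian with diagonal covariance, the derivative with respect to the variance in the $p$-th coordinate equals one half times the expectation of $\partial^2_{p}\varphi$ (the second partial derivative along $e_p$). Concretely, differentiating under the integral sign yields
\begin{equation*}
\frac{d}{dt}\int_{\H}\varphi\,d\pi_t = \frac12\sum_{p=1}^{+\infty}(q_{2,p}-q_{1,p})\int_{\H}\langle D^2\varphi(v)e_p,e_p\rangle\,d\pi_t(v).
\end{equation*}
Each term $\langle D^2\varphi(v)e_p,e_p\rangle$ is bounded in absolute value by $\sup_{v}\|D^2\varphi(v)\|$ since $e_p$ is a unit vector, which lets me bound the integrand uniformly and pull the supremum out of both the integral and the $t$-integration.

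Putting these together, I would estimate
\begin{equation*}
\Big|\int_{\H}\varphi\,d\pi_2-\int_{\H}\varphi\,d\pi_1\Big| \leq \frac12\sup_{v\in\H}\|D^2\varphi(v)\|\sum_{p=1}^{+\infty}\big|q_{2,p}-q_{1,p}\big|\int_{0}^{1}dt,
\end{equation*}
and the trivial $t$-integral gives exactly the claimed constant $\tfrac12$. The main obstacle I anticipate is justifying the differentiation-under-the-integral step and the interchange of the infinite sum with the integral over $\H$ in infinite dimensions: one must verify that $\varphi$ and its second derivatives are integrable against each $\pi_t$ and that the series converges, which is where the bounded-Hessian hypothesis and the (implicit) finiteness of $\sum_p|q_{2,p}-q_{1,p}|$ do the essential work. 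An alternative that sidesteps some of these analytic subtleties would be a coupling/Taylor argument: represent a draw from $\pi_j$ as $\sum_p\sqrt{q_{j,p}}\,g_p e_p$ with a shared standard Gaussian sequence $(g_p)$, expand $\varphi$ to second order along the difference, and control the remainder by the Hessian bound; this makes the factor $\tfrac12$ appear through the same Gaussian second-moment computation, but I would still need the summability of $\sum_p|q_{2,p}-q_{1,p}|$ to guarantee convergence of the coupled series in $\H$.
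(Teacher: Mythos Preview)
Your main interpolation argument is correct and yields exactly the stated bound with the right constant $\tfrac12$; the heat-equation identity $\partial_{\sigma_p^2}\IE[\varphi]=\tfrac12\IE[\partial_p^2\varphi]$ together with the uniform Hessian bound does all the work, and the technical justifications you flag (differentiation under the integral, exchange of sum and integral) are indeed the only points needing care.

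The paper takes a different route: a coupling/Taylor argument rather than interpolation. It sets $X_j=\sum_p\sqrt{q_{j,p}}\,\gamma_p e_p$ for a shared Gaussian sequence $(\gamma_p)$, then introduces \emph{independent} correction terms $R_j=\sum_p\sqrt{\max((-1)^j(q_{1,p}-q_{2,p}),0)}\,\delta_p e_p$ built from a second Gaussian sequence $(\delta_p)$, chosen so that $X_1+R_1$ and $X_2+R_2$ have the same law. One then writes $\IE\varphi(X_2)-\IE\varphi(X_1)$ as a telescoping sum through this common law and Taylor-expands each piece; the first-order terms vanish because $R_j$ is independent of $X_j$ and centered, and the second-order remainder gives $\tfrac12\sup\|D^2\varphi\|\,(\IE|R_1|^2+\IE|R_2|^2)=\tfrac12\sup\|D^2\varphi\|\sum_p|q_{2,p}-q_{1,p}|$. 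This avoids any differentiation-under-the-integral issues at the price of a slightly artificial coupling.

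One caution about the alternative you sketch at the end: the naive coupling $X_j=\sum_p\sqrt{q_{j,p}}\,g_p e_p$ with a single shared $(g_p)$ does \emph{not} make the first-order Taylor term $\IE[D\varphi(X_1)(X_2-X_1)]$ vanish, because $X_1$ and $X_2-X_1$ are correlated. The paper's trick of bringing in an independent second sequence $(\delta_p)$ is precisely what fixes this. Your interpolation approach sidesteps the issue entirely, which is arguably cleaner.
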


\begin{remark}\label{rem:ordre_Trace_optim}
Assume that $q_{2,p}\geq q_{1,p}$. Then the above Lemma~\ref{lem:ordre_Trace} yields optimal orders of convergence. Indeed choosing the test function
$\varphi_{\rm opt}(u)=\exp(-|u|^2)$, 
Lemma $9.5$ in \cite{JK15} yields that
$$\int_{\H}\varphi_{\rm opt}d\pi_1-\int_{\H}\varphi_{\rm opt}d\pi_2\geq \frac{{\rm Trace}\bigl(Q_2-Q_1\bigr)}{\exp\bigl(6{\rm Trace}(Q_2)\bigr)}.$$
This means that the quantity ${\rm Trace}\bigl(Q_2-Q_1\bigr)$ also provides a lower bound for the error between the invariant distributions $\pi_1$ and $\pi_2$.
\end{remark}

\begin{proof}[Proof of Lemma \ref{lem:ordre_Trace}]
Let $\bigl(\gamma_p\bigr)_{p\in\IN^*}$ and
$\bigl(\delta_p\bigr)_{p\in\IN^*}$ be two independent sequences of {\it
i.i.d.} standard real valued Gaussian random variables, centered and
with variance $1$.
Set $X_j=\sum_{p\in\IN^*}\sqrt{q_{j,p}}\gamma_pe_p$, and
$R_{j}=\sum_{p\in\IN^*}\sqrt{\max\bigl((-1)^{j}(q_{1,p}-q_{2,p}),0\bigr)}\delta_pe_p$,
for $j\in\{1,2\}$. Observe that $X_j\sim \pi_j$, and that $X_1+R_{1}$
and $X_2+R_{2}$ have the same Gaussian distribution. This yields
\begin{align*}
&\Big| \int_{\H}\varphi d\pi_2-\int_{\H}\varphi
d\pi_1\Big|=\Big|\IE\bigl[\varphi(X_2)\bigr]-\IE\bigl[\varphi(X_1)\bigr]\Big|\\
&=\Big|\IE\bigl[\varphi(X_2)\bigr]-\IE\bigl[\varphi(X_2+R_2)\bigr]
+\IE\bigl[\varphi(X_1+R_1)\bigr]-\IE\bigl[\varphi(X_1)\bigr]\Big|\\
&\leq \Big|\IE\bigl[\varphi(X_2+R_2)\bigr]-\IE\bigl[\varphi(X_2)\bigr]
-\IE\bigl[D\varphi(X_2).R_2\bigr]\Big|\\
&\phantom{=}+\IE\Big|\bigl[\varphi(X_1+R_1)\bigr]-\IE\bigl[\varphi(X_1)\bigr]
-\IE\bigl[D\varphi(X_1).R_1\bigr]\Big|.
\end{align*}
Indeed, $\IE\bigl[D\varphi(X_j).R_j\bigr]=0$, because $X_j$ and $R_j$ are
independent and $\IE\bigl[R_j\bigr]=0$.
Using a second-order Taylor expansion, we deduce
\begin{equation*}
\Big| \int_{\H}\varphi d\pi_2-\int_{\H}\varphi d\pi_1\Big|\leq
\frac{\sup_{v\in \H}\|D^2\varphi(v)\|}{2}\bigl(\IE
\big|R_{1}\big|^2+\IE\big|R_2\big|^2\bigr),
\end{equation*}
where we note
$\IE\big|R_{1}\big|^2+\IE\big|R_{2}\big|^2=\sum_{p=1}^{+\infty}\bigl|q_{2,p}-q_{1,p}\bigr|$.
\end{proof}

We now explain how Lemma~\ref{lem:ordre_Trace} permits to find the order of convergence of $\overline{\mu}_{\infty}^{h}$ to $\mu_{\infty}$.
We define, for $p\in \IN^*$, the component processes $u_{\cdot}(p)$ and $\overline{u}_{\cdot}(p)$ by projecting on the eigenvector $e_p$: for any $n\in\IN$,
$u_n(p)=\langle u_n,e_p\rangle, \ \overline{u}_{n-1}(p)=\langle \overline{u}_{n-1},e_p\rangle.$
Then \eqref{eq:u_n},\eqref{eq:u_n_bar} applied to \eqref{eq:SPDE_B} rewrites as a system of independent equations, decoupled with respect to $p\in\IN^*$,
\begin{equation}
\begin{gathered}
u_{n+1}(p)=\A(-\lambda_ph,-b_ph)u_n(p)+\sigma \sqrt{h}\sqrt{q_p}\B(-\lambda_ph,-b_p h)\xi_{n,p}\\
\overline{u}_{n}(p)=\C(-\lambda_ph)u_n(p)+\sigma \sqrt{h}\sqrt{q_p}\D(-\lambda_ph)\xi_{n,p},
\end{gathered}
\end{equation}
where $\sqrt{q_p}\xi_{n,p}=\langle\xi_{n}^{Q},e_p\rangle$: thus $\bigl(\xi_{n,p}\bigr)_{n\in\IN,p\in\IN^*}$ are independent standard Gaussian random variables, and the rational functions $\A,\B,\C,\D$ satisfy for any $z\in(-\infty,0)$ and $\beta\in(-1,\min(1,|z|))$,
\begin{equation}
\A(z,\beta)=\frac{1+\beta}{1-z} \quad \B(z,\beta)=\frac{1+\frac{\beta}{2}-\frac{3-\sqrt{2}}{2}z}{(1-z)(1-\frac{3-\sqrt{2}}{2}z)}, \quad
\C(z)=1, \quad \D(z)=\frac{1}{2(1-z/2)^{1/2}}.
\end{equation}
Since for $\beta\in(-1,\min(1,|z|))$ the stability condition $|\A(z,\beta)|<1$ is satisfied, straightforward computations yield
\begin{equation}\label{eq:express_Q_h}
\begin{gathered}
Q_{\infty}^{h}e_p=\lim_{n\rightarrow +\infty}\IE|u_n(p)|^2e_p=\frac{\sigma^2 q_p}{2(\lambda_p+b_p)}\R(-\lambda_p h,-b_p h)e_p\\
\overline{Q}_{\infty}^{h}e_p=\lim_{n\rightarrow +\infty}\IE|\overline{u}_n(p)|^2e_p=\frac{\sigma^2 q_p}{2(\lambda_p+b_p)}\overline{\R}(-\lambda_p h,-b_p h)e_p
\end{gathered}
\end{equation}
where $\R(z,\beta)=\frac{-2(z+\beta)\B(z,\beta)^2}{1-\A(z,\beta)^2}$ and
$$\overline{\R}(z,\beta)=\C(z)^2\R(z,\beta)-2(z+\beta)\D(z)^2=1+\beta z\frac{ P_1(z) \beta + P_2(z)}{(2+\beta-z)P_3(z)}
$$
with polynomial functions $P_1(z) = 10-4\sqrt 2 -(11-6\sqrt 2)z$, $P_2(z) = 20-8\sqrt2 -(44-24\sqrt2)z+(11-6\sqrt2)z^2$ and $P_3(z)=(2-z)(2-(3-\sqrt 2) z)^2$.

The following estimate $\overline{\R}(z,\beta)=1+\bigo(z\beta)$ as $z,\beta\rightarrow 0$ is crucial to obtain an improved order for the convergence of $\overline{\mu}_{\infty}^{h}$ to $\mu_{\infty}$ when $h\rightarrow 0$. 
It is not surprising because the scheme samples exactly the invariant measure of \eqref{eq:SPDE_B} in both cases $A=0$ or $B=0$
(as already shown in Proposition \ref{pro:exactLstable}), equivalently $\overline{\R}(z,0)=\overline{\R}(0,\beta)=1$ for all $z,\beta$. 
\begin{lemma}\label{lem:aux_poly}
For all $z\leq 0$ and all $\beta\in(-1,\min(1,|z|))$, we have
$$
\big|1-\overline{\R}(z,\beta)\big|\leq |z\beta|\frac{(15-6\sqrt{2})}{4(1-z)^2}.$$
\end{lemma}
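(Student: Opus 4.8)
The plan is to work directly from the explicit rational expression for $\overline{\R}(z,\beta)$ displayed just above the lemma. Writing $1-\overline{\R}(z,\beta)=-z\beta\,\frac{P_1(z)\beta+P_2(z)}{(2+\beta-z)P_3(z)}$ and factoring out the prefactor $|z\beta|$, which already appears on the right-hand side of the claimed bound, it suffices to establish
$$
\frac{|P_1(z)\beta+P_2(z)|}{(2+\beta-z)\,P_3(z)}\le\frac{15-6\sqrt2}{4(1-z)^2}
$$
for all $z\le0$ and all $\beta\in(-1,\min(1,|z|))$.

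First I would settle all the signs. For $z\le0$ one checks that $P_1(z)>0$, $P_2(z)>0$, and $P_3(z)=(2-z)(2-(3-\sqrt2)z)^2>0$, since in each case $z\le0$ makes every monomial contribute a nonnegative term; likewise $2+\beta-z>1-z>0$ because $\beta>-1$. A direct computation gives $P_2(z)-P_1(z)=(10-4\sqrt2)-(33-18\sqrt2)z+(11-6\sqrt2)z^2>0$ for $z\le0$, and since $\beta>-1$ and $P_1(z)>0$ we have $P_1(z)\beta+P_2(z)>P_2(z)-P_1(z)>0$. Hence the numerator is positive and the absolute value may be dropped.

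Next I would maximise over $\beta$. The key identity is $P_2(z)-(2-z)P_1(z)=(8\sqrt2-12)z\ge0$ for $z\le0$, which lets me decompose $\frac{P_1(z)\beta+P_2(z)}{2+\beta-z}=P_1(z)+\frac{P_2(z)-(2-z)P_1(z)}{2+\beta-z}$. Since the added term is nonnegative and strictly decreasing in $\beta$, its supremum over $\beta>-1$ is attained in the limit $\beta\to-1$, yielding $\frac{P_1(z)\beta+P_2(z)}{2+\beta-z}\le\frac{P_2(z)-P_1(z)}{1-z}$ (equivalently, the $\beta$-derivative of the left-hand side has the constant sign of $(12-8\sqrt2)z\le0$).

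The final step then reduces the lemma to the single-variable polynomial inequality $4(1-z)\big(P_2(z)-P_1(z)\big)\le(15-6\sqrt2)P_3(z)$ for $z\le0$. I would expand both sides, form the difference, and substitute $z=-t$ with $t\ge0$; the difference becomes $(80-32\sqrt2)+(344-200\sqrt2)t+(526-348\sqrt2)t^2+(193-132\sqrt2)t^3$, whose four coefficients are all positive (numerically $\approx 34.7,\,61.2,\,33.9,\,6.3$), so the difference is nonnegative on $t\ge0$. I expect the only genuine work, and the place where sign errors could creep in, to be this $\sqrt2$-bookkeeping in the polynomial expansion; the monotonicity argument and the sign checks are otherwise short and elementary.
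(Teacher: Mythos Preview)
Your proof is correct. The overall strategy matches the paper's---factor out $|z\beta|$, bound the $\beta$-dependence, then control the remaining function of $z$---but the two executions differ in both steps.

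For the $\beta$-step, the paper simply uses $|\beta|<1$ together with $P_2(z)\ge P_1(z)\ge 0$ to bound $|P_1(z)\beta+P_2(z)|\le P_1(z)+P_2(z)$, and separately $(2+\beta-z)^{-1}\le(1-z)^{-1}$. Your decomposition $\frac{P_1\beta+P_2}{2+\beta-z}=P_1+\frac{P_2-(2-z)P_1}{2+\beta-z}$, exploiting the exact identity $P_2-(2-z)P_1=(8\sqrt2-12)z$, is sharper: it yields $\frac{P_2-P_1}{1-z}$ rather than $\frac{P_1+P_2}{1-z}$.

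For the $z$-step, the paper asserts (without details) that $(1-z)\frac{P_1(z)+P_2(z)}{P_3(z)}$ is increasing on $z\le0$, with maximum $\frac{P_1(0)+P_2(0)}{P_3(0)}=\frac{15-6\sqrt2}{4}$ at $z=0$; this makes the constant tight for the paper's intermediate bound. You instead verify the polynomial inequality $4(1-z)(P_2-P_1)\le(15-6\sqrt2)P_3$ directly by expansion and sign-checking of coefficients. Your route trades the paper's one-line monotonicity claim for explicit $\sqrt2$-arithmetic; the benefit is that every step is fully verified, at the cost of more bookkeeping.
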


\begin{proof}
Observe that for all $z\leq 0$, we have $P_2(z)\geq P_1(z) \geq 0$ and $(2+\beta-z)>0$, $P_3(z)>0$. 
Since $({2+\beta-z})^{-1} \leq ({1-z})^{-1}$, we obtain
$$\big|1-\overline{\R}(z,\beta)\big|\leq |z\beta| \frac{P_1(z)+P_2(z)}{(1-z)P_3(z)}.$$
The estimate then follows by observing that $(1-z)\frac{P_1(z)+P_2(z)}{P_3(z)}$ is an increasing function of $z\leq 0$, with maximum
at $z=0$, given by $\frac{P_1(0)+P_2(0)}{P_3(0)} = \frac{(15-6\sqrt{2})}{4}$.
\end{proof}

We are in position to state our main convergence result, which yields the order of convergence $r=\s+1$ for any $\s<\overline{\s}$ for the invariant distribution with postprocessing~$\overline{\mu}_{\infty}^{h}$.
\begin{theorem}\label{pro:ordre_post}
Consider the method \eqref{eq:u_n}-\eqref{eq:u_n_bar} applied to \eqref{eq:SPDE_B}. 
Let $\varphi\in\mathcal{C}^{2}(\H,\IR)$, such that $\sup_{v\in \H}\|D^2\varphi(v)\|<+\infty$. For any $\s<\overline{\s}$ there exists $C_{\s}\in(0,+\infty)$ such that for any $h\in(0,1/L)$ we have
\begin{equation}\label{eq:convhspost}
\Big| \int_{\H}\varphi d\mu_{\infty}-\int_{\H}\varphi d\overline{\mu}_{\infty}^{h} \Big| \leq C_{\s}\sup_{v\in \H}\|D^2\varphi(v)\| \sigma^2 h^{1+\s}.
\end{equation}
Moreover, if $B=0$ ({\it i.e. } $b_p=0$ for all $p\in\IN^*$) then the method is exact: $\overline{\mu}_{\infty}^{h}=\overline{\mu}_{\infty}$.
\end{theorem}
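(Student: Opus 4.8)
The plan is to combine the two key lemmas already established—Lemma~\ref{lem:ordre_Trace} (which bounds differences of Gaussian integrals by a trace of eigenvalue differences) and Lemma~\ref{lem:aux_poly} (which gives the sharp estimate $|1-\overline{\R}(z,\beta)|\leq |z\beta|\frac{15-6\sqrt2}{4(1-z)^2}$)—to control the sum $\sum_p |q_{2,p}-q_{1,p}|$ in the case $\pi_1=\mu_\infty$, $\pi_2=\overline{\mu}_\infty^h$. First I would apply Lemma~\ref{lem:ordre_Trace} with $Q_1=Q_\infty$ and $Q_2=\overline{Q}_\infty^h$, so that the $p$-th eigenvalues are $q_{1,p}=\frac{\sigma^2 q_p}{2(\lambda_p+b_p)}$ and $q_{2,p}=\frac{\sigma^2 q_p}{2(\lambda_p+b_p)}\overline{\R}(-\lambda_p h,-b_p h)$, using the explicit expressions in \eqref{eq:express_Q_h}. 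The difference of eigenvalues is then exactly
\[
|q_{2,p}-q_{1,p}|=\frac{\sigma^2 q_p}{2(\lambda_p+b_p)}\,\big|1-\overline{\R}(-\lambda_p h,-b_p h)\big|.
\]

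Next I would insert the estimate from Lemma~\ref{lem:aux_poly} with $z=-\lambda_p h$ and $\beta=-b_p h$ (noting that the admissibility condition $\beta\in(-1,\min(1,|z|))$ holds precisely because $|b_p|<\lambda_1\leq\lambda_p$ and $h<1/L$, so $|b_p h|<1$ and $|b_p h|<\lambda_p h$). This yields
\[
\big|1-\overline{\R}(-\lambda_p h,-b_p h)\big|\leq \lambda_p |b_p| h^2\,\frac{15-6\sqrt2}{4(1+\lambda_p h)^2}.
\]
Multiplying by the prefactor and using $|b_p|<\lambda_1\leq\lambda_p$ together with $\lambda_p+b_p\geq \lambda_p-\lambda_1\cdot\frac{|b_p|}{\lambda_1}\geq\lambda_p(1-L/\lambda_1)$ (so that the denominator is bounded below by a constant multiple of $\lambda_p$), I would obtain a per-mode bound of the form
\[
|q_{2,p}-q_{1,p}|\leq C\,\sigma^2 q_p\,\frac{\lambda_p h^2}{(1+\lambda_p h)^2}.
\]

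The main work is then to convert this into the claimed rate $h^{1+\s}$ by summing over $p$ and using the trace condition \eqref{eq:conditionTrace}. The key interpolation step is to split the factor $\frac{\lambda_p h^2}{(1+\lambda_p h)^2}$ using the elementary bound $\frac{\lambda_p h}{(1+\lambda_p h)^2}\leq C_\s (\lambda_p h)^{1-\s}$ for any $\s\in[0,1)$ (since $\frac{x}{(1+x)^2}\leq x^{1-\s}$ up to a constant), which gives
\[
\frac{\lambda_p h^2}{(1+\lambda_p h)^2}=h\cdot\frac{\lambda_p h}{(1+\lambda_p h)^2}\leq C_\s\,h\cdot (\lambda_p h)^{1-\s}=C_\s\,h^{2-\s}\lambda_p^{1-\s}.
\]
Hmm—this produces $h^{2-\s}$ rather than $h^{1+\s}$, so the exponent bookkeeping must be arranged so that the surviving power of $\lambda_p$ matches the convergent trace $\mathrm{Trace}((-A)^{-1+\s}Q)=\sum_p q_p\lambda_p^{-1+\s}$. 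The correct split keeps $\lambda_p^{-1+\s}$ against $q_p$ and extracts the remaining $h$-power: writing $\frac{\lambda_p h^2}{(1+\lambda_p h)^2}=\lambda_p^{-1+\s}\cdot\lambda_p^{2-\s}\frac{h^2}{(1+\lambda_p h)^2}$ and bounding $\lambda_p^{2-\s}\frac{h^2}{(1+\lambda_p h)^2}=h^{1+\s}\frac{(\lambda_p h)^{2-\s}}{(1+\lambda_p h)^2}\leq C_\s h^{1+\s}$ (the supremum of $\frac{x^{2-\s}}{(1+x)^2}$ over $x\geq0$ being finite for $\s>0$). Summing over $p$ then gives $\sum_p|q_{2,p}-q_{1,p}|\leq C_\s\sigma^2 h^{1+\s}\,\mathrm{Trace}((-A)^{-1+\s}Q)$, and Lemma~\ref{lem:ordre_Trace} finishes \eqref{eq:convhspost}. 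This exponent juggling—choosing the split that pairs $\lambda_p^{-1+\s}q_p$ with the convergent trace while leaving a bounded $h$-uniform supremum—is the one genuinely delicate point; the rest is direct substitution. Finally, the exactness claim when $B=0$ is immediate: setting $b_p=0$ gives $\beta=0$, and by Lemma~\ref{lem:aux_poly} (or the identity $\overline{\R}(z,0)=1$ noted before the lemma) every eigenvalue difference vanishes, so $\overline{Q}_\infty^h=Q_\infty$ and hence $\overline{\mu}_\infty^h=\mu_\infty$.
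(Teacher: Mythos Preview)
Your approach is exactly the paper's: apply Lemma~\ref{lem:ordre_Trace}, insert the bound from Lemma~\ref{lem:aux_poly}, and extract $h^{1+\s}$ via the trace condition. However, the exponent bookkeeping you flag as ``the one genuinely delicate point'' contains a slip that you should fix. The claimed identity
\[
\lambda_p^{2-\s}\frac{h^2}{(1+\lambda_p h)^2}=h^{1+\s}\frac{(\lambda_p h)^{2-\s}}{(1+\lambda_p h)^2}
\]
is false: the right-hand side equals $\lambda_p^{2-\s}h^{3}/(1+\lambda_p h)^2$, so the correct exponent on $h$ outside is $\s$, not $1+\s$, and with your displayed per-mode bound you would only recover order $h^{\s}$. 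The reason you carry one power of $\lambda_p$ too many is the wasteful estimate $|b_p|\leq\lambda_p$ used to reach the per-mode bound. Use instead $|b_p|\leq L$ (a constant, since $L=\|B\|$) together with $\lambda_p+b_p\geq\lambda_p(1-L/\lambda_1)$; this gives the sharper per-mode bound
\[
|q_{2,p}-q_{1,p}|\leq C\sigma^2 q_p\,\frac{h^2}{(1+\lambda_p h)^2}
\]
(no stray $\lambda_p$). Then the correct split, exactly as in the paper, is
\[
q_p\,\frac{h^2}{(1+\lambda_p h)^2}=q_p\lambda_p^{-1+\s}\cdot h^{1+\s}\cdot\frac{(\lambda_p h)^{1-\s}}{(1+\lambda_p h)^2},
\]
and $\sup_{x\geq0}\frac{x^{1-\s}}{(1+x)^2}<\infty$ for $\s<1$ finishes the sum against ${\rm Trace}\bigl((-A)^{-1+\s}Q\bigr)$. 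With this correction your argument coincides with the paper's proof; the exactness claim for $B=0$ is handled the same way.
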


\begin{proof}
Thanks to Lemma~\ref{lem:ordre_Trace}, it is sufficient to control 
\begin{align*}
\sum_{p=1}^{+\infty}\big|\langle \bigl(Q_{\infty}-\overline{Q}_{\infty}^{h}\bigr)e_p,e_p\rangle \big|&=\sum_{p=1}^{+\infty}\frac{\sigma^2 q_p}{2(\lambda_p+b_p)}|\overline{\R}(-\lambda_p h,-b_p h)-1|\\
&\leq C\lambda_1\sigma^2 \sum_{p=1}^{+\infty}\frac{q_p|b_p|}{\lambda_p} \frac{\lambda_ph^2}{(1+\lambda_p h)^2}
\leq C\lambda_1^2\sigma^2 \sum_{p=1}^{+\infty}q_p\lambda_{p}^{-1+\s}\frac{(\lambda_p h)^{1-\s} h^{1+\s}}{(1+\lambda_p h)^2}\\
&\leq C_{\s}\lambda_1^2\sigma^2 {\rm Trace}\bigl((-A)^{-1+\s}Q\bigr) h^{1+\s},
\end{align*}
which gives the order of convergence $\s+1$ for all $\s<\overline{\s}$. Moreover, it is clear that $\overline{\R}(z,0)=1$ for any $z\leq 0$, so that if $b_p=0$ for all $p\in\IN^*$ then $\overline{Q}_{\infty}^{h}=Q_{\infty}$.
\end{proof}


\begin{remark}\label{pro:ordre_Euler}
We see in Lemma \ref{lem:aux_poly} that the error 
is zero if $z=0$ or $\beta=0$. This is related to Proposition  \ref{pro:exactLstable}  which shows that the error of the postprocessed method is zero in the linear case when $\beta=0$. This feature permits to gain one power of $h$ and thus one order of accuracy in
the proof above. 
In contrast, notice that the standard linearized implicit Euler scheme has the lower order $\s$ for all $\s<\overline{\s}$. Indeed, under the hypotheses of Theorem \ref{pro:ordre_post}, then for all $h$ small enough, $Q_{\infty}-Q_{\infty,\nu}^{h}$ is nonnegative and using Lemma~\ref{lem:ordre_Trace} we only need to control
\begin{align*}
{\rm Trace}\bigl(Q_{\infty}-Q_{\infty,\nu}^{h}\bigr)&=\frac{\sigma^2}{2}\sum_{p=1}^{+\infty}\frac{q_p}{\lambda_p+b_p}\frac{(\lambda_p+b_p)h}{2+\lambda_p h+ b_p h}\\
&=\frac{\sigma^2 h^{\s}}{2}\sum_{p=1}^{+\infty}q_p\lambda_{p}^{-1+\s}\frac{\lambda_p}{\lambda_p+b_p}\frac{\frac{\lambda_p+b_p}{\lambda_{p}^{\s}}h^{1-\s}}{2+\lambda_ph+b_ph}\\
&\leq \frac{\sigma^2 h^{\s}}{2}C_{\s}{\rm Trace}\bigl((-A)^{-1+\s}Q\bigr).
\end{align*}
Thus for all $\s<\overline{\s}$ that there exists $C_{\s}\in(0,+\infty)$ such that for all $h$ small enough
\begin{equation} \label{eq:convhsEuler}
\Big| \int_{\H}\varphi d\mu_{\infty}-\int_{\H}\varphi d\nu_{\infty}^{h} \Big| \leq C_{\s}\sup_{v\in \H}\|D^2\varphi(v)\| \sigma^2 h^{\s}.
\end{equation}
It is also possible to prove that the above Theorem~\ref{pro:ordre_post} (resp. \eqref{eq:convhsEuler}) gives optimal order of convergence, namely that \eqref{eq:convhspost} does not hold true for all $h>0$ if $\s>\overline{\s}+1$ (resp. \eqref{eq:convhsEuler} does not hold true for any $h>0$ if $\s>\overline{\s}$). This fact is also supported by the numerical simulations of Figure \ref{fig:figconv} in Section~\ref{sec:num}.
\end{remark}


\subsection{Spatial regularity analysis}
\label{sec:qualitative}

We show in this section that the postprocessed method yields a solution which has the same regularity in space as the exact solution, in contrast to the standard linearized implicit Euler method, which yields
a solution that is too smooth. The action of the postprocessing thus not only increases the order of the convergence, but also provides a qualitatively better approximation with the correct regularity.
For all Borel probability measure $\mu$ on $\H$, we define its regularity, denoted $\reg(\mu) \in \IR \cup\{-\infty,+\infty\}$, by the supremum of $s$ such that the norm $|\cdot|_s$ of $\H^{s}$ is square-integrable with respect to $\mu$:
\begin{equation}\label{eq:reg}
\reg(\mu) = \sup \{s \in \IR ,\ \int_{\H} |u|_s^2\mu(du)  < \infty\}.
\end{equation}
The interpretation in terms of random variables is the following. 
For a random variable~$v$ with values in $\H$, denoting $\mathbb{P}_v$ its probability law, we have
the identity $\IE(|v|_s^2) = \int_{\H} |u|_s^2\mathbb{P}_v(du)$ which is a finite quantity if $s<\reg(\mathbb{P}_v)$ and~$+\infty$ if $s>\reg(\mathbb{P}_v)$. %
%
Notice that instead of quantifying the regularity in terms of the Sobolev space $\H^{\s}$, one could state similar results in terms of H\"older regularity. We refer to \cite{ChW12} for such a study of the $\theta$-method applied to the stochastic heat equation with finite differences.

We focus for simplicity on the case $F=0$, $\sigma=1$, and with the initial condition $u_0=0$, but we emphasize that the extension to the general semilinear situation is straightforward, the regularity being determined only by the stochastic terms in our setting.
%
%
First, notice that for the exact solution $u(t)$, the regularity parameter is $\reg(\mathbb{P}_{u(t)}) = \reg(\mu_\infty) = \overline s$ for all $t>0$, (see also Proposition~\ref{pro:ergo_cont}). Indeed, using \eqref{eq:mild_solution}, and the  It\^o formula yields
$$\IE|u(t)|_{\s}^{2} = \sum_{p=1}^{+\infty}\frac{q_p}{2\lambda_{p}^{1-\s}}\bigl(1-\exp(-2\lambda_p t)\bigr) \begin{cases} <+\infty \quad \text{if}~\s<\overline{\s} \\ =+\infty  \quad \text{if}~\s>\overline{\s} \end{cases}.$$
The following proposition shows that at the discrete-time level, the standard linearized implicit Euler method $v_n$ in \eqref{eq:v_n} and the method without processing $u_n$ in \eqref{eq:u_n} have the regularity parameter $\overline s + 1$, whereas 
the postprocessor $\overline u_n$ in \eqref{eq:u_n_bar}  has the correct regularity parameter $\overline s$.

\begin{proposition} \label{pro:regularity}
Consider \eqref{eq:SPDE} with $F=0$, $\sigma=1$, $u_0=0$ and assume \eqref{eq:conditionTrace}.
Then, for all $h>0$ and all $n\in\mathbb{N}^*$, 
$
\reg(\mathbb{P}_{v_n}) = \reg(\mathbb{P}_{u_n}) = \reg(\nu_\infty^h) = \reg(\mu_\infty^h) = \overline s+1,$ whereas 
$\reg(\mathbb{P}_{\overline u_n}) = \reg(\overline \mu_\infty^h) = \overline s.
$
\end{proposition}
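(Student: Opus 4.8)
The plan is to compute, for each process, the quantity $\IE|\cdot|_\s^2 = \sum_p \lambda_p^\s \, \IE|\cdot(p)|^2$ in the basis $\{e_p\}$ and determine for which $\s$ this sum converges. Since we assume $F=0$, $\sigma=1$, $u_0=0$, each component process is a centered Gaussian, so everything reduces to tracking the $p$-th variance and its behavior as $p\to\infty$ (equivalently as $\lambda_p\to\infty$). The key point is that the trace condition \eqref{eq:conditionTrace} says $\sum_p q_p \lambda_p^{-1+\s}<\infty$ iff $\s<\overline\s$, so the regularity threshold is determined by comparing the $p$-th variance against $q_p/\lambda_p$: an extra factor of $\lambda_p^{-1}$ in the variance shifts the threshold up by $1$, while a factor that stays bounded away from $0$ and $\infty$ leaves it at $\overline\s$.

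First I would handle the exact solution and the continuous invariant measure, recording that $\reg(\mu_\infty)=\reg(\mathbb{P}_{u(t)})=\overline\s$; this is already displayed in the excerpt via $\IE|u(t)|_\s^2 = \sum_p \frac{q_p}{2\lambda_p^{1-\s}}(1-e^{-2\lambda_p t})$, which is finite iff $\s<\overline\s$. Next I would treat the linearized implicit Euler scheme $v_n$. With $F=0$ the recursion \eqref{eq:v_n} gives $v_n(p)$ as a sum of independent Gaussians scaled by powers of $J_1=(1+\lambda_p h)^{-1}$, so $\IE|v_n(p)|^2 = q_p h \sum_{k=1}^{n}(1+\lambda_p h)^{-2k}$; summing the geometric series produces a factor $\frac{1}{(1+\lambda_p h)^2-1}\sim \frac{1}{2\lambda_p h}$ for large $\lambda_p$, hence $\IE|v_n(p)|^2 \sim \frac{q_p}{2\lambda_p}$ times a bounded factor. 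The crucial observation is the \emph{extra} $\lambda_p^{-1}$ compared with the raw noise variance $q_p h$: this is exactly the smoothing introduced by the implicit resolvent, and it pushes the threshold to $\overline\s+1$. The stationary variance $Q_\infty^h e_p$ gives the same asymptotics (it is the $n\to\infty$ limit), so $\reg(\nu_\infty^h)=\overline\s+1$ as well.

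For $u_n$ in \eqref{eq:u_n} with $F=0$ the analysis is essentially identical: $u_n(p)$ is again a geometric-type sum in $J_1$ of Gaussians whose coefficients involve the bounded operators $J_2$ and the numerical constants, so the large-$\lambda_p$ behavior of $\IE|u_n(p)|^2$ and of $\mu_\infty^h$ is still $\asymp q_p/\lambda_p$, giving $\reg = \overline\s+1$. I would make this precise using the closed form $\B(z,0)$ and $\R(z,0)$ from Section~\ref{sec:analysis_order} specialized to $b_p=0$, reading off the stationary component variance $\frac{\sigma^2 q_p}{2\lambda_p}\R(-\lambda_p h,0)$ and noting $\R(z,0)$ is bounded for $z\le 0$ and behaves like a nonzero constant as $z\to-\infty$. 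The decisive contrast comes from the postprocessor $\overline u_n = u_n + \tfrac12 J_3 \sqrt h\,\xi_n^Q$: here $\IE|\overline u_n(p)|^2 = \IE|u_n(p)|^2 + \tfrac{h}{4}\,\langle J_3 Q J_3^T e_p,e_p\rangle = \IE|u_n(p)|^2 + \tfrac{h}{4}\,\frac{q_p}{1-\lambda_p h/2}\cdot(-1)$... more carefully $\frac{h}{4} q_p (1+\lambda_p h/2)^{-1}$, which for large $\lambda_p$ is $\asymp q_p/\lambda_p^{1}\cdot$constant — so it too seems $O(q_p/\lambda_p)$. The subtle point I expect to be the main obstacle is showing this added term is \emph{not} of the same order but genuinely restores the $q_p/\lambda_p^{1}$ threshold down to $\overline\s$: the resolution is that $\tfrac h4 \langle J_3QJ_3^T e_p,e_p\rangle = \tfrac h4 q_p/(1+\lambda_p h/2)$ is comparable to $q_p h/(\lambda_p h)= q_p/\lambda_p$ only up to the factor $h$, and one must verify that the leading $\lambda_p^{-1}$ contributions of $\IE|u_n(p)|^2$ and of the postprocessing term do \emph{not} cancel but rather the postprocessing term introduces the correct $q_p$-weighted tail matching $\mu_\infty$; the cleanest route is to use the exactness identity $\overline\R(z,0)=1$ together with the stationary formula $\overline Q_\infty^h e_p = \frac{\sigma^2 q_p}{2\lambda_p}\overline\R(-\lambda_p h,0)e_p = \frac{\sigma^2 q_p}{2\lambda_p}e_p$, which equals $Q_\infty e_p$ exactly (Theorem~\ref{pro:ordre_post} with $B=0$) and hence has threshold $\overline\s$. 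For the transient law $\mathbb{P}_{\overline u_n}$ one argues the same way on the finite sum: the dominant large-$p$ term of $\IE|\overline u_n(p)|^2$ is of order $q_p/\lambda_p$ with a nonvanishing coefficient, so $\sum_p \lambda_p^\s \IE|\overline u_n(p)|^2 = \sum_p q_p \lambda_p^{\s-1}\cdot O(1)$ converges iff $\s<\overline\s$, completing the claim.

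Throughout, the structural mechanism to emphasize is that summability of $\sum_p q_p\lambda_p^{-1+\s}$ is the definition of the threshold $\overline\s$, so every process whose $p$-th stationary (or transient) variance behaves like $C\, q_p/\lambda_p$ with $C$ bounded away from $0$ for large $p$ has regularity exactly $\overline\s$, whereas any process whose variance decays like $q_p/\lambda_p^2$ gains one order and has regularity $\overline\s+1$. The main obstacle, as noted, is carefully identifying the leading-order $\lambda_p\to\infty$ asymptotics of the variance for $v_n$, $u_n$ and $\overline u_n$ — in particular confirming the postprocessor genuinely removes the spurious extra smoothing rather than merely adding a comparable term — and checking the transient (finite-$n$) case matches the stationary one, which follows because the geometric sums are uniformly controlled in $n$ and the added postprocessing term is $n$-independent.
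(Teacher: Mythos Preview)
Your overall strategy---computing the componentwise variances $\IE|\cdot(p)|^2$ and comparing their large-$\lambda_p$ behaviour against $q_p/\lambda_p$---is exactly the approach the paper takes. But there is a concrete computational slip that is precisely the source of the ``main obstacle'' you flag later. For fixed $h>0$ and $\lambda_p\to\infty$ one has
\[
(1+\lambda_p h)^2-1\;\sim\;(\lambda_p h)^2,\qquad\text{\emph{not} }2\lambda_p h
\]
(the latter is the small-$\lambda_p h$ regime, which is irrelevant for regularity). Hence
\[
\IE|v_n(p)|^2=q_p h\sum_{k=1}^n(1+\lambda_p h)^{-2k}\;\asymp\;\frac{q_p h}{(1+\lambda_p h)^2-1}\;\sim\;\frac{q_p}{h\,\lambda_p^{2}},
\]
so the variance of $v_n(p)$ (and likewise of $u_n(p)$) decays like $q_p/\lambda_p^{2}$, not $q_p/\lambda_p$. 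This is what yields $\reg(\mathbb{P}_{v_n})=\reg(\mathbb{P}_{u_n})=\overline\s+1$: indeed $\sum_p\lambda_p^{\s}\cdot q_p\lambda_p^{-2}$ converges iff $\s-1<\overline\s$. Your stated asymptotic $\IE|v_n(p)|^2\sim q_p/(2\lambda_p)$ would instead give threshold $\overline\s$, contradicting the claim you are trying to prove; this internal inconsistency is a warning sign.

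Once the asymptotic is corrected, the ``subtle point'' dissolves entirely. The postprocessor contribution
\[
\frac{h}{4}\bigl\langle J_3QJ_3^{T}e_p,e_p\bigr\rangle=\frac{h\,q_p}{4(1+\lambda_p h/2)}\;\sim\;\frac{q_p}{2\lambda_p}
\]
is of \emph{strictly larger} order than $\IE|u_n(p)|^2\sim q_p/(h\lambda_p^{2})$ as $\lambda_p\to\infty$, and since $\xi_n^Q$ is independent of $u_n$ the two variances simply add. There is no cancellation issue: the postprocessor just adds a genuinely less regular term that lowers the regularity from $\overline\s+1$ to $\overline\s$. This is exactly how the paper argues it---bounding $\IE|u_n|_\s^2$ via the estimates from the moment proposition, then computing $\IE|J_3\sqrt h\,\xi_n^Q|_\s^2=\sum_p q_p\lambda_p^{\s-1}\frac{\lambda_p h}{1+\lambda_p h/2}$ directly and reading off its finiteness threshold. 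Your appeal to $\overline\R(z,0)=1$ for the stationary law $\overline\mu_\infty^h$ is correct and gives the same conclusion there.
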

\begin{proof}
Inspecting the proof of Proposition~\ref{pro:moments}, we have $u_n=\ell_n$ and for all $\s<\overline{\s}+1, h>0,n\in\IN^*$
$$\IE\big|\ell_n\big|_{\s+1}^{2}\leq \frac{C}{h}\sum_{p=1}^{+\infty}q_p\lambda_{p}^{\s}\frac{(\lambda_p h) ^2}{(1+\lambda_p h)^2-1} \leq   \frac{C}{h}{\rm Trace}\bigl((-A)^{\s-1}Q\bigr);$$
this yields $\reg(\mathbb{P}_{u_n}) = \reg(\mu_\infty^h) \geq \overline s+1$. The reverse inequality is obtained with a similar lower bound. 
The proof for the standard linearized implicit Euler scheme $v_n$ is similar.
%
Now, adding the postprocessing,
$\overline{u}_{n}=u_n+\frac{1}{2}J_3\sigma \sqrt{h}\xi_{n}^{Q},$ we obtain using the definition of $J_3$,
\begin{align*}
\IE\big|J_3\sigma \sqrt{h}\xi_{n}^{Q}\big|_{\s}^{2}&=h{\rm Trace}\bigl((I-\frac{h}{2}A)^{-1}(-A)^{\s}Q\bigr)
=\sum_{p=1}^{+\infty}\frac{q_p}{\lambda_{p}^{1-\s}}\frac{\lambda_ph}{1+\frac{1}{2}\lambda_ph}\begin{cases} <+\infty \quad \text{if}~\s<\overline{\s}, \\ =+\infty  \quad \text{if}~\s>\overline{\s}. \end{cases}
\end{align*}
The term $\frac{1}{2}J_3\sigma \sqrt{h}\xi_{n}^{Q}$ thus has exactly the same regularity $\overline{\s}$ as the exact solution.
This concludes the proof of $\reg(\mathbb{P}_{\overline u_n}) = \reg(\overline \mu_\infty^h) = \overline s$.
\end{proof}

\section{Numerical experiments}
\label{sec:num}

\begin{figure}[b!]
\centering
\medskip
\scalebox{0.80}{\global\def\path{#1}\input{prog/figconvfinie.inp}}
\caption{
Comparison of the new method (solid lines) with 
the standard linearized implicit Euler method (dashed lines) and the trapezoidal method (dashed-dotted lines) for the scalar SDE~\eqref{eq:heatspdedisc} (dimension $N=1$) with $A=-1,\sigma=1$, 
nonlinearity $f(x)$. 
Error for $\mathbb{E}(\exp(-X(T)^2))$ at final time $T=1$ versus the stepsize $h$, 
where $1/h=8,12,16,24,32,44,64,92,128$. Averages over $10^{10}$ samples. 
\label{fig:figconvfinie}}
\end{figure}

In this section, we compare numerically the performances of the new postprocessed method \eqref{eq:newmeth}
with the standard linearized implicit Euler method \eqref{eq:v_n}, and the trapezoidal method
\eqref{eq:CN}, both in finite and infinite dimensions.

We consider first in Figure \ref{fig:figconvfinie} the scalar nonlinear SDE
\eqref{eq:heatspdedisc} with dimension $N=1$, parameters $A=-1,\sigma=1$ and the initial condition $X(0)=0$. 
Taking $f_1(x)=Ax$ and $f_2(x)=f(x)$ in \eqref{eq:newmeth}, we consider the nonlinearities $f(x)=-x-\sin(x)$ and $f(x)=-2x-x^3$, respectively, and we compute the averages over $10^{10}$ independent trajectories with final time $T=1$ and compare for many time stepsizes the accuracy
for $\mathbb{E}(\exp(-X(T)^2)) = \int_{-\infty}^{+\infty} \exp({-x^2}) \rho(x)dx$.
The final time $T=1$ is chosen large enough so that the equilibrium is reached and the exponentially decaying term 
$e^{-\lambda T}$ in \eqref{eq:th2} is negligible.
In the left picture of Figure \ref{fig:figconvfinie} where the nonlinearity $f(x)=-x-\sin(x)$ is Lipschitz, we observe as shown in Theorem \ref{thm:order2} the expected order $2$ of convergence for the new method, while the standard methods exhibit order $1$ of convergence (see the reference lines with slopes $1,2$). Although our analysis in  Section 
\ref{sec:nonlineardimfinie} applies only to globally-Lipschitz vector fields, we observe that the excellent performances 
of the new method persist also in the example with the non-Lipschitz nonlinearity $f(x)=-2x-x^3$ and the globally bounded test function $\phi(x)=\exp(-x^2)$ (right picture of Figure \ref{fig:figconvfinie}). 

\begin{figure}[tb]
\small
\centering
\subfigure[Standard linearized Euler method, sample trajectory $u(x,t)$ and corresponding profile at final time $t=1$.]{
\scalebox{0.85}{\includegraphics[width=0.5\textwidth]{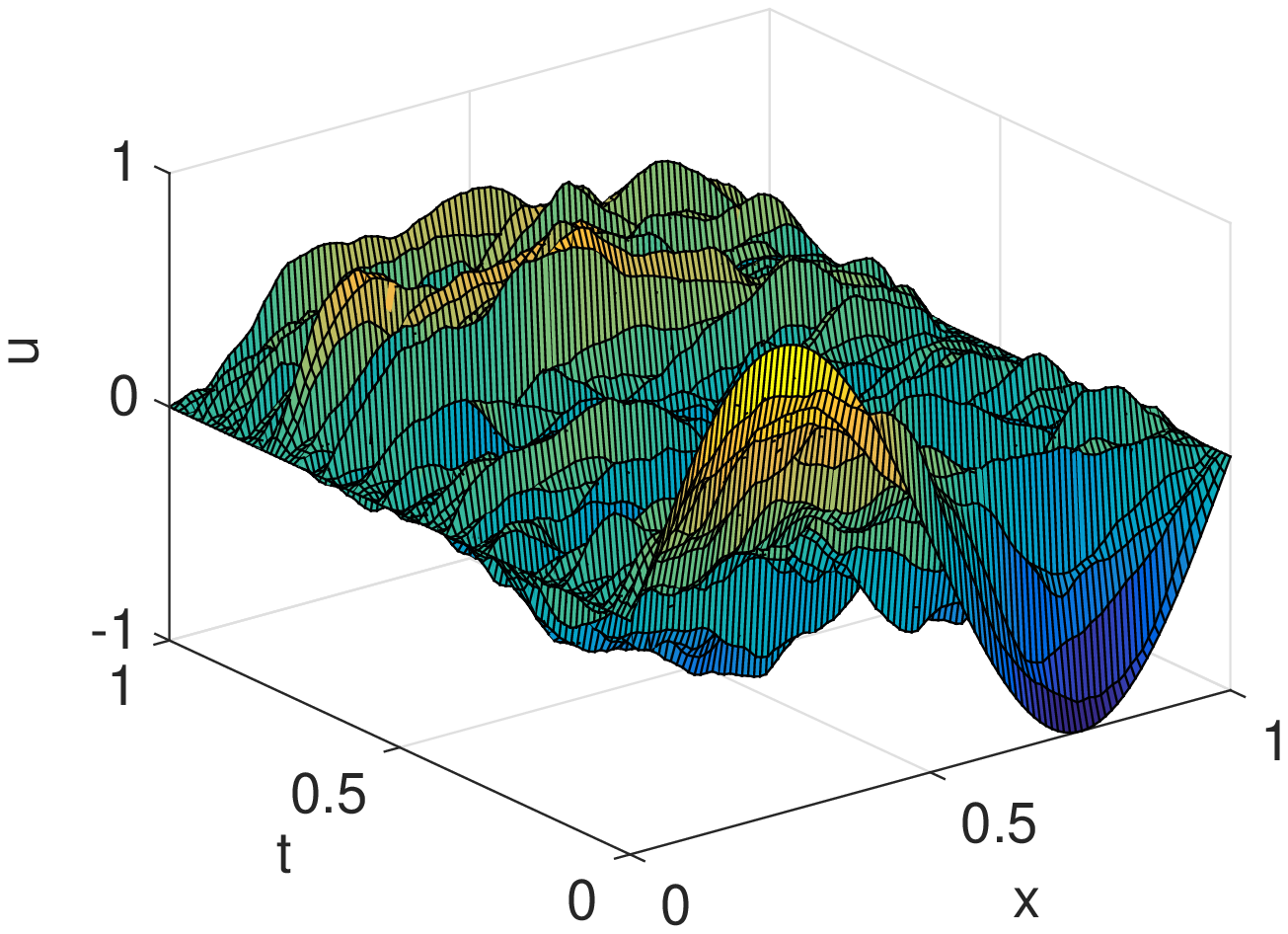} \qquad
\includegraphics[width=0.47\textwidth]{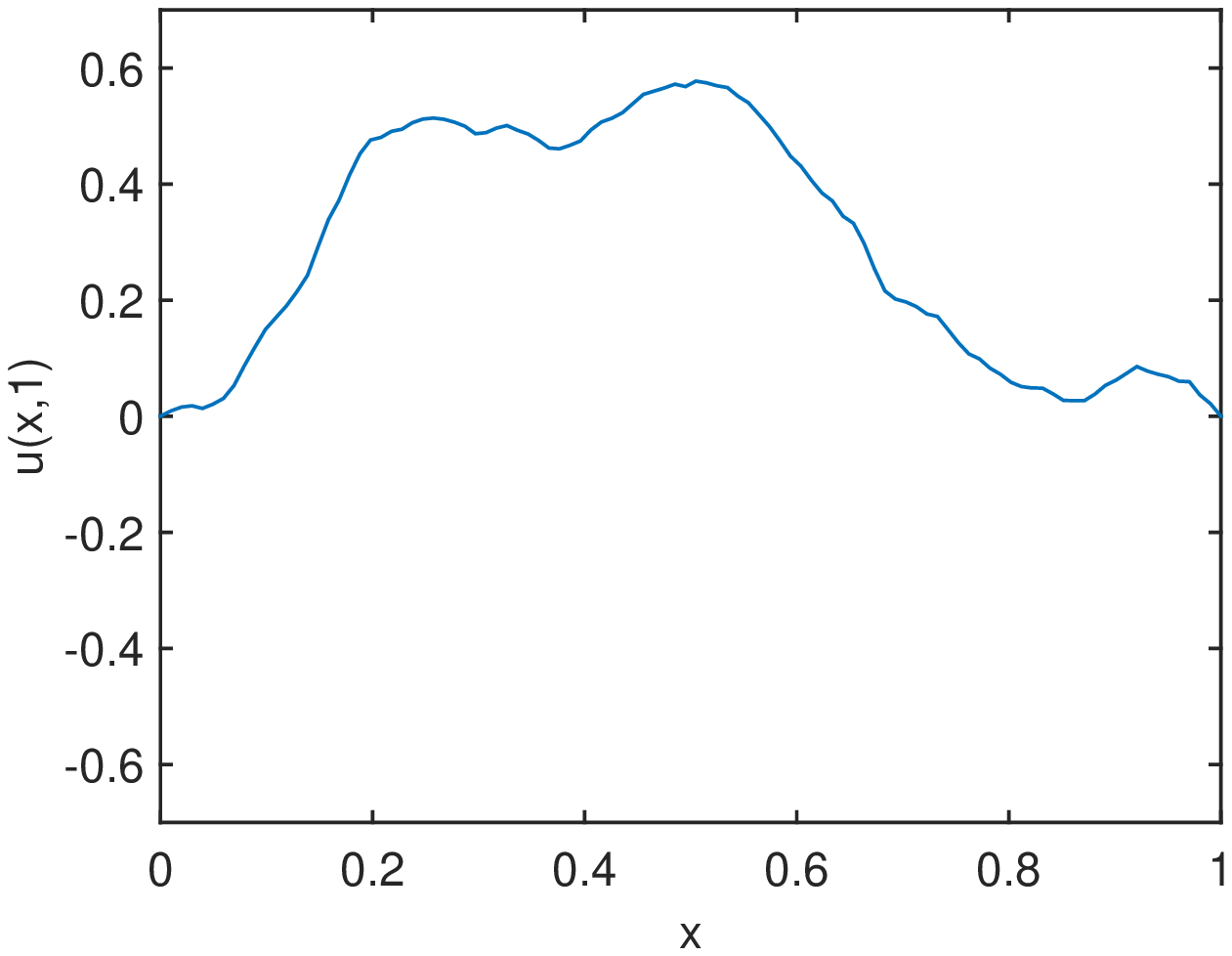}}
}
\subfigure[New method with postprocessor, sample trajectory $u(x,t)$ and corresponding profile at final time $t=1$.]{
\scalebox{0.85}{\includegraphics[width=0.5\textwidth]{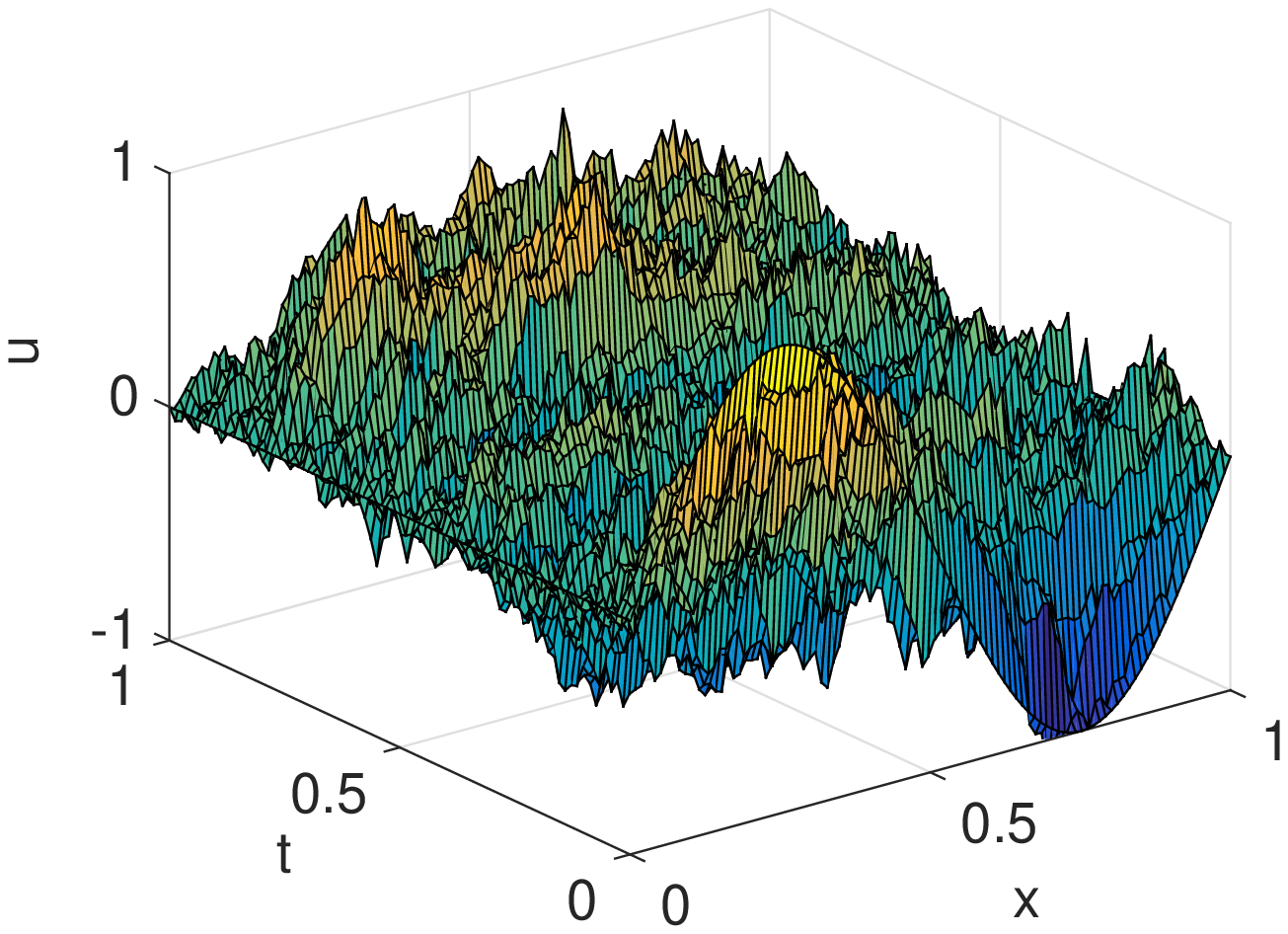} \qquad
\includegraphics[width=0.47\textwidth]{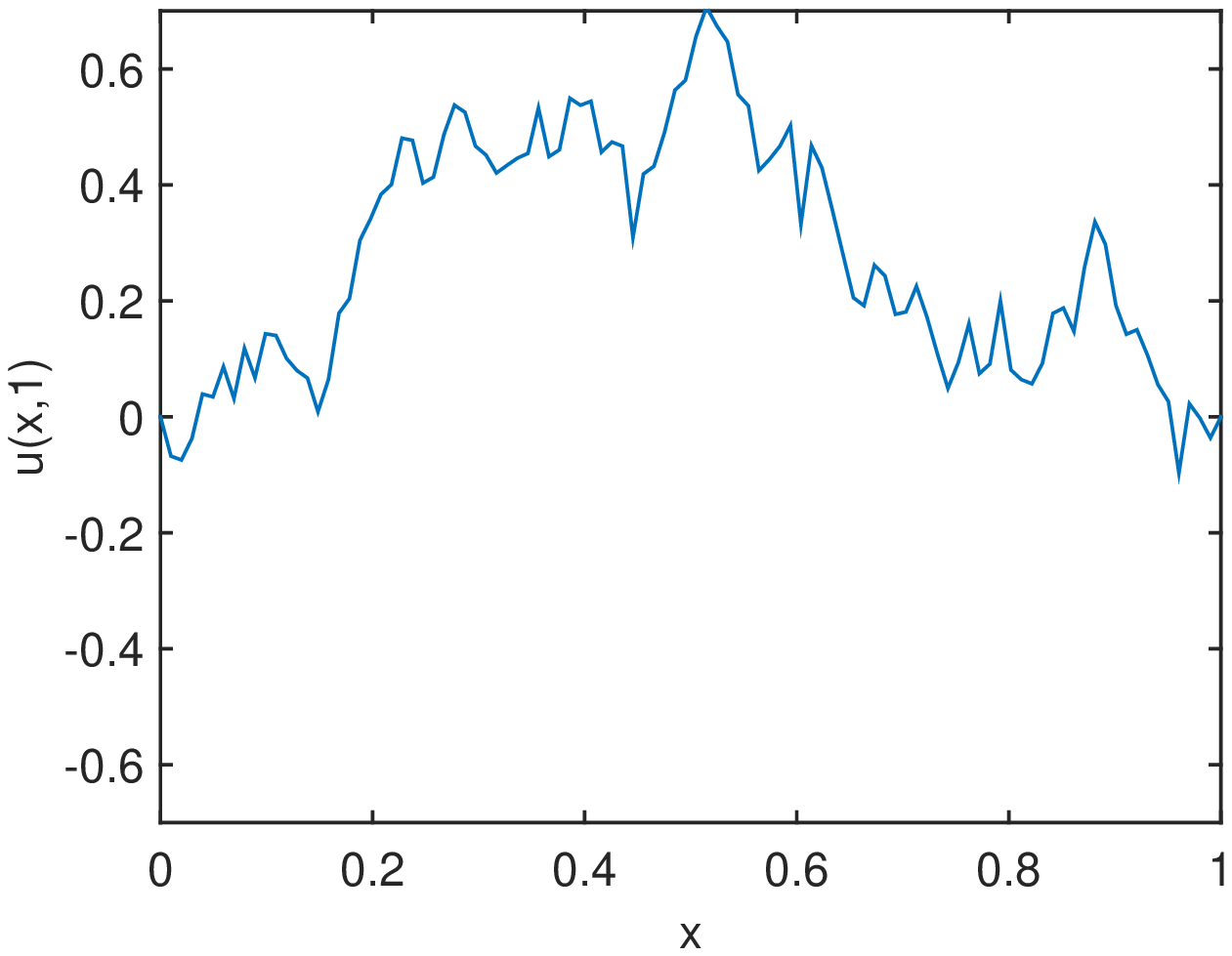}}
}
\caption{
Samples of realisation of the stochastic nonlinear heat equation \eqref{eq:heat1d} with $f(u)=-u-\sin(u)$ using the standard linearized implicit Euler method and the new method with postprocessor. With $N=100$ space grid points and timestep size $h=1/100$.
\label{fig:figtraj}}
\end{figure}
We next consider a standard finite-difference approximation $U_j(t) \simeq u(j\Delta x,t)$ of the 1D heat equation \eqref{eq:heat1d} with zero Dirichlet boundary conditions on a uniform grid with size $\Delta x = 1/(N+1)$. This yields
the following system of SDEs in dimension $N$,
\begin{equation*} 
dX(t) = \frac1{\Delta x^2}\begin{pmatrix} 
-2 & 1 \\
1 & -2 & 1 \\
& \ddots &\ddots &\ddots  \\
&& 1 & -2
\end{pmatrix}
\begin{pmatrix} X^1(t) \\ X^2(t) \\\vdots \\X^N(t) \end{pmatrix} dt 
+ \begin{pmatrix} f(X^1(t)) \\ f(X^2(t)) \\\vdots \\f(X^N(t)) \end{pmatrix} dt
+
\frac1{\sqrt{\Delta x}} 
\begin{pmatrix} dW^1(t) \\ dW^2(t) \\\vdots \\ dW^N(t) \end{pmatrix},
\end{equation*}
where $W^1,\ldots,W^N$ are independent one-dimensional standard Wiener processes.

Considering $N=100$ grid points for the space discretization, we take the initial condition
$u(x,0)=\sin(2\pi x)$ and plot in Figure \ref{fig:figtraj} a sample trajectory on the time interval $(0,1)$ for the 
standard linearized implicit Euler method \eqref{eq:v_n} and the new method with postprocessor $\overline X_n$, using the same sets of generated random numbers for both methods. 
We observe that the solution of the standard linearized implicit Euler method (Fig.\ts\ref{fig:figtraj}(a)) is qualitatively too smooth compared to the new method (Fig.\ts\ref{fig:figtraj}(b)) with 
the postprocessor applied at each timestep, which corroborates the statement of Proposition \ref{pro:regularity} in Section \ref{sec:qualitative}.  Notice that the trajectory for the new method without applying the postprocessor would look very similar to that of the standard linearized implicit Euler method. The spatial regularity observed in Figure~\ref{fig:figtraj}(b) is the same as the one of a diffusion process driven by Brownian motion, conditioned to be zero at initial and final times. This property is not surprising, see \cite{HSVW05,HSV07} for a study of the link between the law of a conditioned diffusion and invariant distributions of SPDEs.

\begin{figure}[htb]
\centering
\medskip
\scalebox{0.80}{\global\def\path{#1}\input{prog/figconv.inp}}
\caption{
Comparison of the new method (solid lines) with 
the standard linearized implicit Euler method (dashed lines) and the trapezoidal method (dashed-dotted lines)
for the stochastic heat equation \eqref{eq:heat1d} with nonlinearity $f(u)$ discretized in space with $N=100$ grid points. 
Error for $\mathbb{E}(\exp(-\|u(T)\|^2))$ at final time $T=1$ versus the stepsize $h$, 
where $1/h=8,12,16,24,32,44,64,92,128$. Averages over $10^9$ samples. 
\label{fig:figconv}}
\end{figure}

We finally plot in Figure \ref{fig:figconv}
the error at final time $T=1$ for the quantity $\mathbb{E}(\exp(-\|u\|_{L^2(0,1)}^2))$ where we use the approximation
$
\|u\|_{L^2(0,1)}^2 \simeq \Delta x\sum_{j=1}^N (X^j)^2.
$
We use arbitrarily the initial condition $u(x,0)=0$ and we compute the averages over $10^9$ independent trajectories
so that the Monte-Carlo errors become negligible. The reference solution is computed using the new method with stepsize $h=1/512$.
We consider respectively, the cases of various nonlinearities $f(u)=0$, $f(u)=-u$, $f(u)=-u-\sin(u)$, $f(u)=-2u-u^3$.
We observe in all cases an order of convergence $1/2$ for the standard linearized implicit Euler method, while the
trapezoidal method has order $1$. 
In contrast to the standard linearized implicit Euler method, the new method with postprocessor has a much better accuracy by a factor $15-250$ in the range of stepsizes considered. 
It has a zero bias for $f(u)=0$ and order $3/2$ in the linear case $f(u)=-u$, as proved in Theorem \ref{pro:ordre_post}.
We observe that the order of convergence persists in the nonlinear case $f(u)=-u-\sin(u)$, and the excellent accuracy
persists in the non-Lipschitz case $f(u)=-2u-u^3$, although an order reduction can be observed.


\vskip0.4mm

\noindent \textbf{Acknowledgements.} 
The research of C.-E.\,B. and G.\,V. is partially supported by the Swiss
National Science Foundation, Grant: No\,200020-149871/1 and No\,200020\_144313/1, respectively. 
The computations were performed at University of Geneva on the Baobab cluster.


\section*{Appendix}

\begin{proof}[Proof of Lemma \ref{lemma:order2}]
We consider the variant \eqref{eq:abc} of the linearized implicit Euler method. 
A straightforward calculation yields the following weak expansion for all $\phi\in\CC_P^\infty(\IR^N,\IR)$,
\begin{equation}\label{eq:expansion1}
\IE(\phi(Y_1)|Y_0=x) = \phi(x) + h\mathcal{L}\phi(x) + h^2\mathcal{A}_1 \phi(x) + \bigo(h^3),
\end{equation}
where the constant symbolized by $\bigo$ is independent of $h$ but depends on $\phi$ and depends on $x$ with a polynomial 
growth.
Here, the fourth-order linear differential operator $\mathcal{A}_1$ is given by\footnote{
We denote $\phi'(x):\mathbb{R}^N \rightarrow \IR$ the first derivative of $\phi$ at point $x\in\mathbb{R}^N$, $\phi''(x)$ the second derivative (a symmetric bilinear form on $\IR^N\times \IR^N$), $\phi'''(x)$ the third derivative (a symmetric bilinear form), etc.
}
\begin{eqnarray*}
\mathcal{A}_1\phi &=& \frac12 \phi''(f_0,f_0) + \frac{\sigma^2}2 \sum_{i=1}^N \phi'''(e_i,e_i, f_0)
+ \frac{\sigma^4}8  \sum_{i,j=1}^N \phi^{(4)} (e_i,e_i,e_j,e_j) + \phi' f_1'f_0\nonumber\\
&+&  \Big((a_1+1)^2\Big) \frac{\sigma^2}2\phi' \sum_{i=1}^N f_1''(e_i,e_i)
+ {\Big(a_1 +1 + a_3\Big) } \sigma^2\sum_{i=1}^N \phi''(f_1'e_i,e_i) \\
&+&  a_2^2 \frac{\sigma^2}2\phi' \sum_{i=1}^N f_2''(e_i,e_i)
+ {a_2  } \sigma^2\sum_{i=1}^N \phi''(f_2'e_i,e_i)
\end{eqnarray*}
where $e_1,\ldots,e_N$ denotes the canonical basis of $\mathbb{R}^N$.

For any $\psi\in\CC_P^\infty(\IR^N,\IR)$, we denote $\langle \psi\rangle=\int_{\IR^N}\psi(y)\mu_\infty(dy)=\int_{\IR^N}\psi(y)\rho(y)dy$. By integration by parts, and using the identity $\nabla \rho = \frac{2}{\sigma^2}\rho f_0$, we obtain,
\begin{eqnarray} 
\left\langle \phi''(f_0,f_0) \right\rangle &=&   \textstyle
\left\langle   -\phi'(f_0'f_0 + (\mathrm{div}\, f_0) f_0 + \frac2{\sigma^2} \|f_0\|^2 f_0) \right\rangle, 
\nonumber\\
\textstyle\left\langle \sigma^2 \sum_i \phi'''(f_0,e_i,e_i) \right\rangle    &=&  \textstyle
\left\langle  \phi'(\sigma^2  \sum_i f_0''(e_i,e_i) + 4f_0'f_0 + 2 (\mathrm{div}\,f_0) f_0 + \frac4{\sigma^2} \|f_0\|^2 f_0) \right\rangle,\nonumber\\
\textstyle \left\langle \sigma^2\sum_{ij}\phi^{(4)}(e_i,e_i,e_j,e_j) \right\rangle &=&  \textstyle
\left\langle  -\sum_i 2\phi'''(f_0,e_i,e_i) \right\rangle,\nonumber\\
\textstyle \left\langle   \sigma^2 \sum_i \phi''(f_1'e_i,e_i) \right\rangle &=&  \textstyle
\left\langle  -\phi'(\sigma^2 \sum_i f_1''(e_i,e_i) + 2f_1' f_0) \right\rangle,
\label{eq:estimparts}
\end{eqnarray}
see \cite{AVZ14a,Vil15} for examples of such calculations. Then
\begin{eqnarray*}
\left\langle \mathcal{A}_1\phi\right\rangle
&=& \left\langle\Big(a_1^2+2 a_1\Big) \frac{\sigma^2}2\phi' \sum_{i=1}^N f_1''(e_i,e_i)
+ {\Big(\frac14 + a_1 + a_3\Big) } \sigma^2\sum_{i=1}^N \phi''(f_1'e_i,e_i) \right. \\
&+& \left. a_2^2 \frac{\sigma^2}2\phi' \sum_{i=1}^N f_2''(e_i,e_i)
+ {\Big(-\frac14 + a_2 \Big) } \sigma^2\sum_{i=1}^N \phi''(f_2'e_i,e_i) \right\rangle
\end{eqnarray*}
We note that the postprocessor $Y_n \mapsto\overline Y_n$ in \eqref{eq:abc} satisfies 
\eqref{eq:phiG} with $q=1$ and
$\mathcal{\overline A}_1 \phi= b_1\phi' f_1 + b_2\phi' f_2  + \frac{c^2}2 \sigma^2 \Delta \phi$. 
\begin{equation*}
[\mathcal{L},\mathcal{\overline A}_1]\phi = 
(b_1-b_2)  \phi'(f_2'f_1-f_1'f_2) 
-\frac{c^2}2 \sigma^2\phi' \sum_{i=1}^N f_0''(e_i,e_i)
 {-c^2 \sigma^2} \sum_{i=1}^N \phi''(f_0'e_i,e_i). 
\end{equation*}
We deduce the following expression for $\left\langle \mathcal{A}_1\phi + [\mathcal{L},\mathcal{\overline A}_1]\phi\right\rangle$, which is the
key quantity to compute in Theorem \ref{thm:main},
Summing up, we obtain
\begin{eqnarray*} 
&&\left\langle \mathcal{A}_1\phi + [\mathcal{L},\mathcal{\overline A}_1]\phi\right\rangle\\
&=&\Big\langle\Big(a_1^2+2 a_1 + b_1-c^2\Big) \frac{\sigma^2}2\phi' \sum_{i=1}^N f_1''(e_i,e_i)
+ {\Big(a_1 + a_3 + \frac14 + b_1-c^2\Big) } \sigma^2\sum_{i=1}^N \phi''(f_1'e_i,e_i)  \nonumber\\
&+&\Big(a_2^2 + b_2-c^2\Big) \frac{\sigma^2}2\phi' \sum_{i=1}^N f_2''(e_i,e_i)
+ {\Big(-\frac14 + a_2  + b_2-c^2\Big) } \sigma^2\sum_{i=1}^N \phi''(f_2'e_i,e_i)
\nonumber\\
&+&(b_1-b_2) \phi'(f_2'f_1-f_1'f_2)
\Big\rangle. 
\end{eqnarray*}
The above quantity is zero by assumption for all test function $\phi$.
This means $(\mathcal{A}_{1} + [\mathcal{L},\mathcal{\overline A}_1])^*\rho = 0$.
Applying Theorem \ref{thm:main}  with $q=1$  to the scheme \eqref{eq:abc} then yields for $\overline Y_n$ in \eqref{eq:abc} a method of weak order two for the invariant measure. This concludes the proof of Lemma \ref{lemma:order2}.
\end{proof}


\bibliographystyle{abbrv}
\bibliography{paper_highorder_spde}

\end{document}